\title[]{Global Okounkov bodies for Bott-Samelson varieties}
\author{David Schmitz and Henrik Sepp\"anen} 
\thanks{The first author was supported by DFG grant BA 1559/6-1. The second author was supported by the DFG Priority Programme 1388 ``Representation Theory"}
\date{\today}
\address{David Schmitz,
   Fach\-be\-reich Ma\-the\-ma\-tik und In\-for\-ma\-tik,
   Philipps-Uni\-ver\-sit\"at Mar\-burg,
   Hans-Meer\-wein-Stra{\ss}e,
   D-35032~Mar\-burg, Germany.}
\email{schmitzd@mathematik.uni-marburg.de}
\address{Henrik Sepp\"{a}nen,
Mathematisches Institut,
Georg-August-Univer\-sit\"at G\"ot\-tingen,
Bunsenstra\ss e 3-5, 
D-37073 G\"ottingen,
Germany}
\email{hseppaen@uni-math.gwdg.de}
\newcommand{\C}{\mathbb{C}}
\newcommand{\R}{\mathbb{R}}
\newcommand{\N}{\mathbb{N}}
\newcommand{\Z}{\mathbb{Z}}
\newcommand{\Q}{\mathbb{Q}}
\newcommand{\mP}{\mathbb{P}}
\newcommand{\spec}{\mbox{Spec}}
\newcommand{\Nef}{\mbox{Nef}}
\newcommand{\Eff}{\overline{\mbox{Eff}}}
\newcommand{\exc}{\mbox{exc}}
\newcommand{\vol}{\mbox{vol}}
\newcommand{\Cox}{\mbox{Cox}}
\renewcommand{\to}{\longrightarrow}
\renewcommand{\O}{\mathcal O}
\newcommand\be{\begingroup\arraycolsep=0.13888em\begin{eqnarray*}}
\newcommand\ee{\end{eqnarray*}\endgroup}
\newtheorem{prop}{Proposition}[section]
\newtheorem{lemma}[prop]{Lemma}
\newtheorem{cor}[prop]{Corollary}
\newtheorem{thm}[prop]{Theorem}
\newtheorem{introthm}{Theorem}
\newtheorem{introcor}[introthm]{Corollary}
\theoremstyle{definition}
\newtheorem{defin}[prop]{Definition}
\newtheorem{rem}[prop]{Remark}
\begin{document}

\begin{abstract}
We use the theory of Mori dream spaces to prove that the 
global Okounkov body of a Bott-Samelson variety with respect to 
a natural flag of subvarieties is rational polyhedral. In fact, we prove more 
generally that this holds for any Mori dream space which admits a flag
of Mori dream spaces satisfying a certain regularity condition. As a corollary, 
Okounkov bodies of effective line bundles over Schubert varieties 
are shown to be rational polyhedral. In particular, it follows that the 
global Okounkov body of a flag variety $G/B$ is rational 
polyhedral. 

As an application we show that the asymptotic behaviour of
dimensions of weight spaces in section spaces of line bundles is given by the counting of 
lattice points in polytopes.
\end{abstract} 

\maketitle

\section*{Introduction}
Okounkov bodies were first introduced by A. Okounkov in his famous paper \cite{Ok96} as 
a tool for studying multiplicities of group representations. The idea is that 
one should be able to approximate these multiplicities by counting the number of integral points 
in a certain convex body in $\R^n$. More precisely, the setting is the 
following. Let $G$ be a complex reductive group which acts as automorphisms 
on an effective line bundle $L$ over a projective variety $X$, and hence defines a 
representation on the space of sections $H^0(X, L^k)$ for  each integral power, $L^k$, of $L$. 
Okounkov constructs a convex compact set $\Delta \subseteq \R^n$, where 
$n=\mbox{dim}\, X$, with the property that for each irreducible finite-dimensional representation 
$V_\lambda$, where $\lambda$---the so-called \emph{highest weight}---is 
a parameter, the multiplicity $m_{k\lambda, k}:=\mbox{dim} \,\, \mbox{Hom}_G(V_{k\lambda}, H^0(X, L^k))$
of $V_{k\lambda}$ in $H^0(X, L^k)$ is asymptotically given by the 
volume of the convex body $\Delta_\lambda:=\Delta \cap H_\lambda$, where $H_\lambda \subseteq \R^{n+1}$ 
is a certain affine subspace, in the following sense:
\begin{align}
\lim_{k \rightarrow \infty} \frac{m_{k\lambda, k}}{k^m}=\mbox{vol}_m(\Delta_\lambda), 
\label{E: approxmult}
\end{align}
where $m$ is the dimension of $\Delta_\lambda$, and the volume on the right hand side 
denotes the $m$-dimensional Euclidean volume of $\Delta_\lambda$. 
An approximation of the integral $\mbox{vol}_m(\Delta_\lambda)$ by Riemann sums 
yields that the multiplicity $m_{k\lambda, k}$ is asymptotically 
given by the number of points of the set $\Delta_\lambda \cap \frac{1}{k}\Z^m$. 

The construction of the body $\Delta$ is purely geometric and 
depends on a choice of a flag $Y_\bullet$, $Y_n \subseteq Y_{n-1} \subseteq \cdots \subseteq Y_0=X$ 
of irreducible subvarieties of 
$X$, and the ``successive orders of vanishing'' of certain invariant 
sections $s \in H^0(X, L^k)$ along this flag. It was later realized by Kaveh and Khovanskii (\cite{KK09}), 
and independently by Lazarsfeld and Musta{\c{t}}{\u{a}}, (\cite{LM09}), that Okounkov's construction 
makes sense for more general subseries of the section ring $R(X, L)$ of a line 
bundle over a variety $X$, and that the asymptotics of dimensions of linear series
can be studied by counting lattice points of convex compact bodies. For example, 
the analog of \eqref{E: approxmult} for the complete linear series of a big line bundle 
$L$ is given by the identity
\begin{align*}
\lim_{k \rightarrow \infty} \frac{h^0(X, L^k)}{n! k^n}=\frac{1}{n!}\mbox{vol}_n(\Delta_{Y_\bullet}(L)), 
\end{align*}
where $\Delta_{X_\bullet}(L)$ denotes the Okounkov body of the line bundle 
$L$ with respect to the flag $Y_\bullet$. 

The above formula shows in particular that the volume of the Okounkov body is 
an invariant of the line bundle $L$, and thus does not depend on the choice 
of the flag $Y_\bullet$. However, the shape of $\Delta_{X_\bullet}(L)$ depends heavily on the 
flag, and it is a notoriously hard problem to explicitly describe these 
bodies, or even to show that they possess some nice properties, such as 
being polyhedral. A yet more difficult problem is to determine the
global Okounkov body $\Delta_{Y_\bullet}(X)$ of a variety $X$ (cf. \cite{LM09}), which is a convex cone 
in a certain Euclidean space, and no longer depends on a particular line bundle $L$.

Returning to the original motivation by Okounkov of studying multiplicites 
of representations, there is also another approach to describing 
multiplicities by counting lattice points in convex bodies, namely Littelmann's 
construction of string polytopes (\cite{Li98}). The setting here is the following. 
Let $G$, again, be a complex reductive group, and let $H \subseteq G$ 
be a maximal torus in $G$. Then any irreducible finite-dimensional $G$-representation 
$V_\lambda$ admits a basis of weight vectors with respect to $H$, and this 
basis is parameterized by the integral points in a rational 
polytope $C^\lambda$, the \emph{string polytope} of $V_\lambda$. Moreover, the 
approximative lattice counting problem is even exact here. Since the 
irreducible representations $V_\lambda$ can be realized as section spaces 
$H^0(X, L_\lambda)$, where $X=G/B$ for a Borel subgroup $B \subseteq G$, and 
$L_\lambda$ is a line bundle over $X$, it would be interesting to 
recover Littelmann's string polytopes $C^\lambda$ as Okounkov bodies, 
or at least to construct rational polyhedral Okounkov bodies which describe 
asymptotic multiplicities of weight spaces.

In the present paper we study both problems described above---namely the 
Okounkov bodies for complete linear series, and the asymptotics of weight 
multiplicites---for general Bott-Samelson varieties $Z=Z_w$ (given 
by a reduced expression $w$ for an element $\overline{w}$ in the Weyl group of $G$), 
that is, Bott-Samelson varieties which desingularize some Schubert variety 
$X_w$ in a flag variety $G/B$. 

Our main result is formulated in the more general context of Mori dream spaces. 
It is well known that a Mori dream space $X$ admits finitely many small $\Q$-factorial
modifications such that any movable divisor $D$ on $X$ is the pullback of a nef divisor under 
one of these modifications. In particular, for any effective divisor $E$, all necessary flips in 
the $E$-MMP exist and terminate. We define an admissible flag $X= Y_0\supseteq Y_1\supseteq\dots\supseteq Y_n$ to be \emph{good}, if
    \begin{enumerate}
        \item 
            $Y_i$ is a Mori dream space for each $0\le i\le n$, and for $i=1,\ldots, n$, $Y_i$ is 
 	cut out by a global section $s_i\in H^0(Y_{i-1},\O_{Y_{i-1}}(Y_i))$, and
        \item
            for any small $\Q$-factorial modification 
            $f:Y_i\dasharrow Y_i'$, the exceptional locus $\exc(f)$
            intersects $Y_{i+1}$ properly.
    \end{enumerate}

Our main result is the following.
\begin{introthm}
Let $X$ be a Mori dream space and assume that there exists a good flag
 $Y_\bullet:  X=Y_0 \supseteq Y_1 \supseteq \cdots \supseteq Y_n$.
	Then the global Okounkov body $\Delta_{Y_\bullet}(X)$ of $X$ with respect to the 
	flag $Y_\bullet$ is a rational polyhedral cone.
\end{introthm}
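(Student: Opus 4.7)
The plan is to proceed by induction on $n=\dim X$, combined with the Mori chamber decomposition of $\Eff(X)$. The base case $n=0$ is trivial, so assume the theorem holds for all Mori dream spaces of dimension less than $n$ equipped with a good flag. Recall that the global Okounkov body $\Delta_{Y_\bullet}(X)\subset\R^n\times N^1(X)_\R$ is the closure of the convex cone generated by the vectors $(\nu_{Y_\bullet}(s),[D])$ for nonzero sections $s\in H^0(X,\O(D))$, and that it projects onto $\Eff(X)$ with fiber over $[D]$ equal to the usual Okounkov body $\Delta_{Y_\bullet}(D)$.

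Since $X$ is a Mori dream space, $\Eff(X)$ decomposes into finitely many rational polyhedral Mori chambers $\mathcal C_\alpha$, each corresponding to a small $\Q$-factorial modification $f_\alpha\colon X\dasharrow X_\alpha$ whose movable classes in $\mathcal C_\alpha$ pull back from nef classes on $X_\alpha$. It suffices to verify that $\Delta_{Y_\bullet}(X)\cap(\R^n\times\overline{\mathcal C_\alpha})$ is rational polyhedral for each chamber. Fix such a chamber and the corresponding $f_\alpha$. Condition (2) of the good flag hypothesis guarantees that $\exc(f_\alpha)$ meets $Y_1$ properly, so $Y_1$ is not contained in the exceptional locus; its strict transform $\tilde Y_1\subset X_\alpha$ is thus well defined, and iterating the property the strict transform flag $\tilde Y_\bullet$ on $X_\alpha$ computes the same valuation $\nu_{Y_\bullet}$ since $f_\alpha$ is an isomorphism in codimension one. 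For $D\in\mathcal C_\alpha$ write $D=f_\alpha^*N+E(D)$ with $N$ nef on $X_\alpha$ and $E(D)$ an effective combination of $f_\alpha$-exceptional divisors depending linearly on $D$. Multiplication by the canonical section of $E(D)$ gives an isomorphism $H^0(X_\alpha,\O(N))\cong H^0(X,\O(D))$ translating valuation vectors by $\nu_{Y_\bullet}(s_{E(D)})$, a linear function of $D$. This reduces the chamber problem to the behaviour of the global Okounkov body over the nef cone of $X_\alpha$ with respect to $\tilde Y_\bullet$.

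To handle the nef cone I invoke the slicing principle of Lazarsfeld--Mustata: for a big class $D$, the slice of $\Delta_{Y_\bullet}(D)$ at the hyperplane $\nu_1=t$ equals the Okounkov body (with respect to the subflag $Y_1\supseteq\cdots\supseteq Y_n$) of the restricted linear series coming from the image of $H^0(X,\O(D-tY_1))$ in $H^0(Y_1,\O(D-tY_1)|_{Y_1})$, and the range of admissible $t$ is governed by pseudo-effectivity of $D-tY_1$. The Mori dream structure makes this range a rational piecewise-linear function of $D$. Since $Y_1$ is itself a Mori dream space carrying the induced good flag $Y_1\supseteq\cdots\supseteq Y_n$ of one smaller length, the inductive hypothesis supplies rational polyhedrality of $\Delta_{Y_1\supseteq\cdots\supseteq Y_n}(Y_1)$. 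Combining this with the piecewise-linear dependence of the restricted class on $(D,t)$ shows that the fibrewise slicing produces a rational polyhedral set over the nef cone of $X_\alpha$. Composing with the affine-linear translation from the exceptional contribution $E(D)$ gives rational polyhedrality on each chamber, and the finite chamber decomposition finishes the argument.

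The main obstacle lies in the slicing step, namely the accurate comparison between $H^0(X,\O(D-tY_1))$ and $H^0(Y_1,\O(D-tY_1)|_{Y_1})$: a priori only the image of the restriction is captured, and without more information this may be a proper subseries whose Okounkov body does not equal the global one on $Y_1$. Bridging this gap is precisely where the good flag hypothesis must be used in full force, especially the fact that $Y_{i+1}$ is globally cut out in $Y_i$ by a single section $s_{i+1}$ together with the compatibility of Mori chamber decompositions on $Y_i$ and $Y_{i+1}$, in order to realise the restricted series as a complete linear series of a class on $Y_1$ depending piecewise-linearly on $(D,t)$.
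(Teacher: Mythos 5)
Your proposal correctly identifies the overall strategy, which is essentially the one in the paper: induction on the length of the flag, the Mori chamber decomposition of $\Eff(X)$, passage via an SQM $f_\alpha$ to the nef cone (using condition~(2) of the good flag to ensure $Y_1$ survives), and then restriction to $Y_1$ to set up the inductive hypothesis. You also locate the crux of the matter honestly and precisely: the comparison of the restricted series $S:=\mathrm{im}\bigl(H^0(X,\O_X(D))\to H^0(Y_1,\O_{Y_1}(D\cdot Y_1))\bigr)$ with the complete graded linear series of $D\cdot Y_1$ on $Y_1$.

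However, that gap is genuine, and the direction in which you gesture at closing it does not match what the paper does. You propose to ``realise the restricted series as a complete linear series of a class on $Y_1$.'' This is not in general possible: the restriction maps need not be surjective level by level for $D$ merely nef, and no amount of piecewise-linear bookkeeping fixes that. The paper's Proposition~\ref{P:restriction} closes the gap by a volume argument instead: since $S$ is a graded \emph{sub}series of the complete one, it suffices to show $\vol(S)=\vol(D\cdot Y_1)$, whence the two Okounkov bodies (one contained in the other, of equal volume) coincide. The volume identity is obtained geometrically from the SQM picture: with $D=f^*\O_{X'}(1)$ for $f:X\dasharrow X'$ and $D'$ ample, condition~(2) of the good flag makes $f_{Y_1}:Y_1\dasharrow Y_1'$ a small birational contraction onto $Y_1'\cong\mathrm{Proj}(S)$, and after passing to the normalization one gets $\vol(D\cdot Y_1)=\vol(\O_{Y_1'}(1))=\vol(S)$; the nef case follows by continuity via \cite[Theorem 4.26]{LM09}. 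Once this lemma is available, the paper does not slice by $\nu_1=t$; it instead works with the semigroup of valuation vectors satisfying $\nu_1=0$ over each movable sub-cone of a Mori chamber, relates its closed convex cone by a $\Z$-linear map $q$ to the corresponding cone on $Y_1$ (Corollary~\ref{C: coneeq}), handles $\nu_1>0$ by dividing by $s_{Y_1}$, and separately accounts for the fixed components of each chamber (Theorem~\ref{T: indok}). Your slicing phrasing is a reasonable equivalent formulation, but without the volume-comparison lemma the inductive step does not go through.
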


In order to apply the above result to Bott-Samelson varieties, we prove the following.
\begin{introthm}
Let $Z=Z_w$ be a Bott-Samelson variety defined by a reduced expression $w$ of an element
$\overline{w}$ in the 
Weyl group of $G$.\\ 

\noindent (i) The variety $Z$ is log-Fano and hence a Mori dream space. (Theorem \ref{T: BSMDS})\\

\noindent (ii) There exists naturally a good flag $Y_\bullet$ on $Z$, 
the so called \emph{vertical flag}. (Proposition \ref{P:BSrestr})
\end{introthm}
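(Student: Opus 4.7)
The plan for part (i) is to exploit the explicit tower description of $Z_w$ as an iterated $\mathbb{P}^1$-bundle, built inductively as $Z_w = P_{i_1} \times^B Z_{w'}$, where $w' = (s_{i_2}, \ldots, s_{i_n})$. From this construction one can read off the anticanonical class by repeated application of the relative canonical bundle formula along the tower, obtaining $-K_{Z_w}$ as the sum of the $n$ natural ``horizontal'' boundary divisors plus the pullback of an effective, semi-ample divisor coming from the fundamental weights (this computation is classical; see Brion--Kumar). One then chooses a $\mathbb{Q}$-boundary $\Delta$ supported on these boundary divisors with coefficients in $(0,1)$ such that $(Z_w, \Delta)$ is klt (this follows since $Z_w$ is smooth and $\Delta$ is a simple normal crossings divisor with small coefficients) and $-(K_{Z_w} + \Delta)$ is ample. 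Thus $Z_w$ is log-Fano, and the theorem of Birkar--Cascini--Hacon--McKernan yields that $Z_w$ is a Mori dream space.

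For part (ii), I would define the vertical flag $Y_\bullet$ by setting $Y_j := Z_{w^{(j)}}$, where $w^{(j)} = (s_{i_{j+1}}, \ldots, s_{i_n})$ is the subword obtained by deleting the first $j$ simple reflections. The tower structure $Z_{w^{(j-1)}} = P_{i_j} \times^B Z_{w^{(j)}}$ exhibits $Y_j$ as the zero locus of a global section of the line bundle $\mathcal{O}_{Y_{j-1}}(Y_j)$ on $Y_{j-1}$, corresponding to the ``identity section'' of the $\mathbb{P}^1$-bundle $Y_{j-1} \to Y_j^-$ (the base of the projection forgetting the first factor). Since each $Y_j$ is itself a Bott-Samelson variety, part (i) gives that it is a Mori dream space, so condition~(1) of the definition of a good flag is satisfied.

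The main obstacle is verifying condition~(2): for every small $\mathbb{Q}$-factorial modification $f : Y_j \dashrightarrow Y_j'$, the exceptional locus $\exc(f)$ must meet $Y_{j+1}$ properly. My plan is to argue by dimension/transversality using the iterated $\mathbb{P}^1$-bundle structure. Concretely, if $\exc(f)$ contained $Y_{j+1}$, then some prime divisor $E \subseteq \exc(f)$ would have to contain the codimension-one subvariety $Y_{j+1}$. Using the description of the Mori chamber decomposition of $Y_j$ via its tower structure, I would analyze the possible contracted divisors of SQMs and show that none of them contain $Y_{j+1}$, because the defining section $s_{j+1}$ restricts nontrivially to every such candidate divisor (the sections defining $Y_{j+1}$ come from pullbacks along the tower projections and thus meet fibers of the $\mathbb{P}^1$-bundles transversally).

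If a global chamber-by-chamber analysis proves too delicate, a fallback would be to use the explicit generators of $\Cox(Z_w)$ given in work on the Cox rings of Bott-Samelson varieties, identifying which prime divisors can be extremal in the movable cone and checking the transversality condition on this finite list. Either way, the key input is the compatibility of the vertical flag with the tower structure, which ensures that the subvarieties $Y_{j+1} \subseteq Y_j$ are, in a very concrete sense, ``in general position'' with respect to the divisorial geometry controlling the SQMs.
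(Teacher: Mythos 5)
Your treatment of part (i) matches the paper's approach in substance: both use the explicit formula for $K_{Z_w}$ in terms of the effective boundary divisors $D_1,\dots,D_n$ and a nef pullback (the paper cites \cite[Lemma 5.1]{LT04}), and then build the boundary $\Delta=\sum(1-\epsilon_i)D_i$ with small $\epsilon_i>0$ so that $-(K_{Z_w}+\Delta)$ is ample; since the $D_i$ form an SNC configuration on the smooth $Z_w$, the pair is automatically klt. That part of your plan is fine.

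Part (ii) has a genuine gap. You correctly identify the flag (each $Y_j\cong Z_{w^{(j)}}$ is a Mori dream space by part (i), and it is cut out by a global section, though note $Y_j$ arises as the \emph{fiber} of $\pi\colon Y_{j-1}\to P_{i_j}/B\cong\mathbb{P}^1$ over the $B$-fixed point, not as a section of a $\mathbb{P}^1$-bundle). But for condition (2) you only offer a plan---a Mori-chamber or Cox-ring case analysis---and no argument. More importantly, your guiding heuristic that $Y_{j+1}$ is ``in general position'' with respect to the divisorial geometry is misleading: $Y_{j+1}$ is a very \emph{special} fiber, the one over the $B$-fixed point. The paper's Proposition \ref{P:BSrestr} works for a completely different reason: the line bundles $\mathcal{O}_{Z_w}(D)$ carry a $P_1$-action by bundle automorphisms (via \eqref{E: repsections}), so the base locus of any complete linear series $|D|$ is $P_1$-invariant, hence so is each irreducible component $F$. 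Since $P_1$ acts transitively on the fibers of $\pi$, any $P_1$-invariant closed subvariety $F$ satisfies $\dim F=\dim(F\cap Y_1)+1$, giving the proper intersection. Combined with the fact that for a Mori dream space the exceptional locus of an SQM is the stable base locus of a defining movable divisor, this yields condition (2) directly, and then one iterates down the flag. Your proposed chamber-by-chamber analysis would, at best, reconstruct this conclusion the hard way, and you would still need some input (such as the $P_1$-equivariance) to see why the special fiber cannot be swallowed by a base component; without that input the dimension count has no reason to hold.
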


As a consequence, all Okounkov bodies $\Delta_{Y_\bullet}(L)$ of 
line bundles $L$ over $Z$ are rational polyhedral. Using the 
desingularization $Z_w \to X_w$ of the Schubert variety $X_w$, we 
also see that line bundles 
over Schubert varieties admit rational polyhedral Okounkov bodies. 

On the representation-theoretic side, we obtain 
Okounkov bodies describing weight multiplicities. Indeed, 
the flag $Y_\bullet$ of subvarieties is $B$-invariant, which allows 
for the construction of affine subspaces $H_\mu$ mentioned before. 
We then get the following result on asymptotics of 
weight multiplicities in a section ring $R(Z, L)$. 
\begin{introthm}
Let $L$ be an effective line bundle over the Bott-Samelson 
variety $Z$. Let $H \subseteq B$ be a torus contained in 
a maximal torus of $G$ lying in $B$, and let $\mu$ 
be an rational $H$-weight. Then there exists an 
affine subspace $H_\mu$ (in $\R^{n+1}$, where $n=\mbox{dim}\, Z$) such that 
the asymptotics of the multiplicity function $m_{k\mu, k}$ defined 
above is given by
\begin{align*}
\lim_{k \rightarrow \infty} \frac{m_{k\mu, k}}{k^m}=\mbox{vol}_m(\Delta_{Y_\bullet}(L) \cap H_\mu), 
\end{align*}
where $\Delta_{Y_\bullet}(L)$ is the rational polyhedral Okounkov body of 
$L$.
\end{introthm}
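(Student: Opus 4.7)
The plan is to exhibit $H_\mu$ as the affine subspace cut out by an $H$-equivariance condition on the Okounkov valuation, and then to reduce the multiplicity asymptotics to lattice-point counting in a rational polytope.

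First, since the vertical flag $Y_\bullet$ is $B$-invariant, hence $H$-invariant, each defining section $s_i \in H^0(Y_{i-1},\O_{Y_{i-1}}(Y_i))$ can be chosen as an $H$-eigenvector with weight $\alpha_i \in \h^*$. After fixing an $H$-linearization of $L$ (replacing $L$ by a positive power if necessary, which does not affect the asymptotics we are after), each space $H^0(Z,L^k)$ decomposes into $H$-weight spaces, and $m_{k\mu,k}=\dim H^0(Z,L^k)_{k\mu}$.

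Next I would establish a weight--valuation compatibility. Recall that the Okounkov valuation $\nu(s)=(a_1,\ldots,a_n)$ is computed by iterated division and restriction: set $t_0=s$, then $a_i = \mbox{ord}_{Y_i}(t_{i-1})$ and $t_i = (t_{i-1}/s_i^{a_i})|_{Y_i}$. Since each $s_i$ is an $H$-eigenvector, the intermediate sections $t_i$ remain weight vectors, and the weight drops by $a_i\alpha_i$ at each step (up to a fixed $k$-linear twist coming from the weight of $\O(-Y_i)|_{Y_i}$). The final value $t_n$ lies in the one-dimensional $H$-module $L^k|_{Y_n}$, which has weight $k\beta_L$ for some fixed $\beta_L \in \h^*$ determined by the linearization. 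Comparing the two descriptions of the weight of $t_n$ forces the affine relation
\begin{equation*}
\lambda = k\beta_L + \sum_{i=1}^{n} a_i\alpha_i
\end{equation*}
whenever $s\in H^0(Z,L^k)_\lambda$. This motivates defining
\begin{equation*}
H_\mu := \bigl\{(x_1,\ldots,x_n,1)\in\R^{n+1} : \beta_L + \sum_{i=1}^{n} x_i\alpha_i = \mu\bigr\},
\end{equation*}
viewed as a rational affine slice at level $\{k=1\}$ meeting $\Delta_{Y_\bullet}(L)$.

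By Theorem A (applied to $Z$ via Theorem B), $\Delta_{Y_\bullet}(L)$ is rational polyhedral, and since $H_\mu$ is rational affine, the intersection $\Delta_{Y_\bullet}(L)\cap H_\mu$ is a rational polytope of some dimension $m$. By construction, the valuation images $\nu(s)$ of an $H$-eigenbasis of $H^0(Z,L^k)_{k\mu}$ are distinct lattice points in the rescaled polytope $k\cdot(\Delta_{Y_\bullet}(L)\cap H_\mu)$. The standard Kaveh--Khovanskii / Lazarsfeld--Musta\c{t}\u{a} lattice-counting asymptotics, applied to the $H$-equivariant sub-semigroup $\bigoplus_k H^0(Z,L^k)_{k\mu} \subseteq R(Z,L)$, then converts the volume of this polytope into the sought asymptotic for $m_{k\mu,k}/k^m$.

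The main obstacle is to verify that the sub-semigroup $\nu\bigl(\bigoplus_k H^0(Z,L^k)_{k\mu}\bigr)$ is large enough to satisfy the Kaveh--Khovanskii hypothesis on the slice, so that $m_{k\mu,k}$ actually equals (asymptotically) the lattice-point count of $k\cdot(\Delta_{Y_\bullet}(L)\cap H_\mu)$ rather than merely bounding it. I would handle this by exploiting the fact that the full valuation semigroup decomposes as a disjoint union of weight slices---a consequence of the $H$-invariance of $\nu$---so that the Kaveh--Khovanskii hypotheses pass from the global Okounkov body to each slice, reducing the weight-refined statement to the unrefined one already established via Theorem A.
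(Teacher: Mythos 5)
Your approach is essentially the same as the paper's, though more explicit. The paper's proof is quite terse: it observes that the $B$-invariance of the flag makes the valuation $B$-invariant ($\nu(b.s)=\nu(s)$), then appeals to \cite{Ok96} and \cite{KK10} for the existence of a well-defined linear projection $q:\Delta_{Y_\bullet}(D)\to\Pi_D$ onto Brion's weight polytope and takes $H_\mu=q^{-1}(\mu)$. You instead derive the affine relation between the weight and the valuation vector by hand, by tracking the $H$-eigenvalue of the intermediate sections $t_i$ through the iterated divide-and-restrict construction; this exhibits $q$ (hence $H_\mu$) concretely as an affine-linear map defined over $\Q$, which is exactly what the cited machinery produces abstractly from the facts that $\nu$ is $H$-invariant, has one-dimensional leaves, and is a semigroup morphism. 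The two routes buy the same thing; yours is more self-contained, the paper's relies on the references.

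Two small corrections to your computation. First, the final relation should read
\begin{equation*}
\lambda \;=\; k\beta_L \;+\; \sum_{i=1}^n a_i\,(\alpha_i-\gamma_i),
\end{equation*}
where $\gamma_i$ is the $H$-weight on the fiber $\mathcal{O}_{Y_{i-1}}(Y_i)\big|_{Y_n}$: dividing by $s_i^{a_i}$ shifts the weight by $-a_i\alpha_i$, but the final comparison is against the fiber weight of $L^k\otimes\bigotimes_i\mathcal{O}(-a_iY_i)$ at $Y_n$, which contributes $-\sum a_i\gamma_i$, not $0$. You flag a ``fixed $k$-linear twist'', but the correction is $a_i$-linear, not $k$-linear; it nonetheless leaves the relation affine in $(a_1,\ldots,a_n,k)$, so $H_\mu$ is still a rational affine subspace and the argument goes through. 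Second, the paper (and the cited sources) impose that $\mu$ lie in the \emph{interior} of $\Pi_D$ for the stated asymptotic to hold with the expected dimension $m=d-r$; your statement should carry this hypothesis as well. Finally, when you argue that the valuation semigroup decomposes into disjoint weight slices, you should invoke not just the $H$-invariance of $\nu$ but also the one-dimensionality of its leaves, which is what forces two sections with the same valuation vector to be proportional, hence to share a weight.
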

If we apply this to the situation when the torus $H$ is a maximal torus, $Z$ is of maximal dimension, and 
thus admits a birational morphism $f: Z \to G/B$ to the flag variety of $G$, and $L=f^*(L_\lambda)$ is the 
pull-back of the line bundle $L_\lambda$ over $G/B$, we obtain the 
following corollary, which can be seen as an analog of Littelmann's result which
describes weight multiplicities using string polytopes.

\begin{introcor}
Let $V_\lambda \cong H^0(G/B, L_\lambda)$ be the irreducible $G$-representation of highest weight 
$\lambda$. If $\mu$ is rational weight, let $m_{k\mu, k}$, for $k\mu$ integral, denote the multiplicity 
of the weight $k\mu$ in the $G$-module $V_{k\lambda}$. Then there exists an $m \in \N$ 
such that 
\begin{align*}
 \lim_{k \rightarrow \infty} \frac{m_{k\mu, k}}{k^m}=
 \mbox{vol}_m(\Delta_{Y_\bullet}(f^*(L_\lambda)) \cap H_\mu).
\end{align*}
\end{introcor}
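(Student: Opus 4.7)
The plan is to deduce the corollary directly from Theorem C by identifying the relevant weight multiplicities on $G/B$ with those on $Z$. Specifically, by hypothesis $Z=Z_w$ has maximal dimension, so $w$ is a reduced word for the longest element $w_0$ of the Weyl group. Hence $f: Z \to G/B$ is a proper birational morphism. Since both $Z$ and $G/B$ are normal, $f_*\mathcal{O}_Z=\mathcal{O}_{G/B}$, and the projection formula yields, for every $k\ge 0$, a natural isomorphism
\begin{equation*}
f^*: H^0(G/B, L_\lambda^k) \xrightarrow{\ \cong\ } H^0(Z, L^k),
\end{equation*}
where $L=f^*(L_\lambda)$.

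The first step is to observe that $f$ is $B$-equivariant by construction of the Bott--Samelson variety, so in particular it is $H$-equivariant. Consequently the isomorphism $f^*$ above is an isomorphism of $H$-modules. Restricting to weight subspaces, this gives for every rational weight $\mu$ with $k\mu$ integral an equality of multiplicities
\begin{equation*}
m_{k\mu, k}(V_{k\lambda}) \;=\; m_{k\mu, k}\bigl(H^0(Z, L^k)\bigr),
\end{equation*}
where the left-hand side is the multiplicity of the weight $k\mu$ in the $G$-module $V_{k\lambda}\cong H^0(G/B, L_\lambda^k)$ and the right-hand side is the multiplicity in $H^0(Z, L^k)$ appearing in Theorem C.

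The second and final step is to apply Theorem C to the effective line bundle $L=f^*(L_\lambda)$ on the Bott--Samelson variety $Z$ and to the weight $\mu$. This directly produces an integer $m$ and an affine subspace $H_\mu$ such that
\begin{equation*}
\lim_{k\rightarrow\infty}\frac{m_{k\mu,k}\bigl(H^0(Z,L^k)\bigr)}{k^m}
=\mbox{vol}_m\bigl(\Delta_{Y_\bullet}(f^*L_\lambda)\cap H_\mu\bigr),
\end{equation*}
which combined with the multiplicity identification above yields the desired formula. The only subtle point to check is that the $H$-equivariance of $f^*$ on sections is compatible with the linearization on $L_\lambda$ chosen in the formulation of Theorem C, but this is automatic because the $H$-action on $H^0(Z,L^k)$ used in Theorem C is induced from the $B$-action on $Z$ and $L$, which are pulled back from $G/B$. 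No further work is required: the main content of the corollary is packaged entirely in Theorem C, and the birational morphism $f$ only serves as a bridge to transport the statement to the representation-theoretic side.
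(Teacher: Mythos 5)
Your proof is correct and follows essentially the same route the paper takes: use the $B$-equivariance of the Bott--Samelson resolution $f\colon Z\to G/B$ to identify $H^0(G/B,L_\lambda^k)\cong H^0(Z,f^*L_\lambda^k)$ as $H$-modules (the paper invokes rational singularities of Schubert varieties for this identification, while you use normality and Zariski's main theorem, which is equally valid since $G/B$ is smooth), and then apply Theorem C to $L=f^*L_\lambda$ on $Z$.
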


The present paper is organized as follows: we begin by recalling basic facts
about Okounkov bodies and Bott-Samelson varieties
in sections \ref{s: OKB} and \ref{s: BS}, respectively. 
The main result is proved in section \ref{s: main}. Finally, in section
\ref{s: appl} the result is applied to Bott-Samelson varieties, furthermore
we address there the representation-theoretic consequences. 

We work throughout over the complex numbers $\C$ as our base field.

\bigskip
{\small\noindent {\bf Acknowledgements.} } The authors would like to thank Thomas Bauer for helpful discussions and suggestions, 
as well as Jesper Funch Thomsen for having answered several questions about Bott-Samelson varieties.
We would also 
like to thank Dave Anderson for interesting remarks on an earlier version of this paper.

\section{Okounkov bodies}\label{s: OKB}

For the  convenience of the reader not familiar with the construction
of Okounkov bodies we give here a quick overview. For a thorough discussion, we
refer the reader to \cite{KK09} and \cite{LM09}. 

To a graded linear  series $W_\bullet$ on a  normal projective variety $X$ 
of dimension $n$ we want to assign a convex subset of $\R^n$ carrying 
information on $W_\bullet$. In practice, more often than not, $W_\bullet$ will
be the complete graded linear series $\bigoplus_k H^0(X,\mathcal{O}_X(kD))$ corresponding 
to an effective divisor $D$. However, at times it is convenient
to work with an arbitrary linear series associated to some divisor $D$, i.e., 
a series $W_\bullet = \{W_k\}$ of subspaces $W_k\subseteq H^0(X,\mathcal{O}_X(kD))$ satisfying
the condition $W_k\cdot W_l \subseteq W_{k+l}$. 
The construction will depend on the choice of
a valuation-like function
$$
	\nu: \bigsqcup_{k \geq 0} W_k \setminus \{0\} \to \Z^n.
$$
Instead of recounting the conditions on $\nu$, we describe a certain type
of valuation which automatically satisfies these conditions.
To this end, let 
$$
	Y_\bullet: X = Y_0 \supseteq Y_1\supseteq \dots \supseteq Y_n
$$
be a flag of irreducible subvarieties such that $\mbox{codim}_X(Y_i)=i$ 
and such that $Y_n$ is a smooth point of each $Y_i$. 
Then, for a section $s\in W_k \subseteq H^0(X,\mathcal{O}_X(kD))$, we set 
$\nu_1(s):= \mbox{ord}_{Y_1}(s)$. If we choose a local equation for $Y_1$, we 
obtain a unique section $\widetilde{s_1}\in H^0(X,\mathcal{O}_X(kD-\nu_1(s)Y_1))$ 
not vanishing identically along $Y_1$, and thus determining a section
$s_1\in H^0(Y_1,\mathcal{O}_{Y_1}(D-\nu_1(s)Y_1))$. We then set $\nu_2(s)=\mbox{ord}_{Y_2}(s_1)$, 
and proceed as before to obtain the valuation vector 
$\nu_{Y_\bullet}(s)=(\nu_1(s),\dots,\nu_n(s))$.

One then defines the valuation semi-group of $W_\bullet$ with respect to 
$Y_\bullet$ as
$$
	\Gamma_{Y_\bullet}(W_\bullet) := \left\{(\nu_{Y_\bullet}(s),m) \in \Z^{n+1} \mid 0\neq s\in W_m \right\}.
$$
Furthermore, we define the Okounkov body of $W_\bullet$ as
$$
	\Delta_{Y_\bullet}(W_\bullet) := \Sigma(\Gamma_{Y_\bullet}(W_\bullet)) \cap (\R^n\times \{1\})
$$
where $\Sigma(\Gamma_{Y_\bullet}(W_\bullet))$ denotes the closed convex cone 
in $\R^{n+1}$ spanned 
by $\Gamma_{Y_\bullet}(W_\bullet)$.

If $W_\bullet$ is the 
complete graded linear series of a divisor $D$, we write $\Delta_{Y_\bullet}(D)$ 
for the Okounkov body of $W_\bullet$. In this case, 
by \cite[Theorem 2.3]{LM09},  we have the important identity
$$
	\mbox{vol}_{\R^n}(\Delta_{Y_\bullet}(D)) = \frac 1{n!} \mbox{vol}_X(D),
$$
showing in particular, that the volume of the body $\Delta_{Y_\bullet}(D)$
is independent of the choice of the flag $Y_\bullet$.
Another important observation made in \cite{LM09} is that
even the shape of the Okounkov body $\Delta_{Y_\bullet}(D)$ for a 
divisor $D$ only depends on the numerical equivalence class of $D$.
It is therefore a natural question how the bodies $\Delta_{Y_\bullet}(D)$ change 
as $[D]$ varies in the N\'eron-Severi vector space $N^1(X)_\R$. An answer to 
this question is given in \cite[Theorem 4.5]{LM09} by proving the existence of the 
global Okounkov body: there exists a closed convex cone 
$$
	\Delta_{Y_\bullet}(X)\subset  \R^n\times N^1(X)_\R
$$ 
such that for each big divisor $D$ the fiber of the second projection over $[D]$ 
is exactly $\Delta_{Y_\bullet}(D)$. 

The concrete determination or even the description of geometric properties
of Okounkov bodies associated to some graded linear series is extremely difficult
in general. As is to be expected this will be even more true of the global 
Okounkov body of a given variety. 
In particular, it is an intriguing question under which conditions on $X$ it is possible to pick a 
flag such that the corresponding global Okounkov body is rational polyhedral. Already in \cite{LM09} this
was shown to be possible for toric varieties. Based on this evidence it is conjectured to work also 
for any Mori dream space $X$. In \cite{dh}, we introduce a possible technique to prove rational 
polyhedrality of global Okounkov bodies by constructing so-called Minkowski bases on $X$. 
Work in progress hints at the feasibility of this approach for any Mori dream space. 
In this paper however, we use a more direct strategy to prove the
rational polyhedrality of global Okounkov bodies with respect to a natural 
choice of flag for a special 
class of Mori dream spaces, namely Bott-Samelson varieties, 
which we introduce in the following section. 

Let us make a small remark on how the construction by Littelmann mentioned in
the introduction compares to Okounkov bodies. 
Littelmann's string polytopes are constructed by purely algebraic and 
combinatorial means, notably using quantum enveloping algebras of Lie algebras, 
and the result thus only shows formal analogies with the outcome of Okounkov's approach. 
However, since---by the Borel-Weil theorem---every irreducible $G$-module 
$V_\lambda$ can be realized as the space of sections $H^0(X, L_\lambda)$ of 
a line bundle $L_\lambda$ over a flag variety $X:=G/B$, where $B$ is a Borel subgroup 
of $G$, Okounkov's approach makes sense for the study of 
asymptotics of weight spaces in the section ring $R(X, L_\lambda)$. 
For the approach to work, the flag $Y_\bullet$ should 
consist of $H$-invariant subvarieties. A natural candidate 
for such a flag would then be a flag of Schubert varieties, and indeed 
this approach was taken by Kaveh in \cite{K11}. For technical 
reasons, notably for having a flag of Cartier divisors, Kaveh 
passed to a Bott-Samelson resolution $Z \to X$ of $X$, pulled 
back $L_\lambda$ to $Z$, and replaced the flag $Y_\bullet$ by 
a flag $Z_\bullet$ of (translations of) Bott-Samelson subvarieties of $Z$. 
The main result in \cite{K11} is that Littelmann's 
string bases can be interpreted in terms of a $\Z^n$-valued 
valuation on the function field $\C(Z)$ of $Z$, depending on 
the flag $Z_\bullet$. This valuation, however, differs from 
those introduced by Okounkov: whereas orders of vanishing 
of a regular function $f$ are described in local coordinates $x_1,\ldots, x_n$  
by the smallest monomial term of $f(x)=\sum_{a \in \N^n} c_a x^a$, 
with respect to some ordering of the variables 
$x_1,\ldots, x_n$, Kaveh's valuation is locally defined by the 
highest monomial term. In geometric language, this 
valuation thus tells how often $f$ can be differentiated in the 
various directions defined by the $x_i$ in the given order.
It still remains an open problem to interpret Littelmann's string polytopes 
as Okounkov bodies, or indeed, more generally, to construct rational polyhedral 
Okounkov bodies for line bundles over flag varieties using some $H$-invariant 
flag $Y_\bullet$. 

\section{Bott-Samelson varieties}\label{s: BS}

Let us recall the basics of Bott-Samelson varieties, following \cite{LT04}.

Let $G$ be a connected complex reductive group, let $B \subseteq G$ be a Borel subgroup, and let 
$W$ be the Weyl group of $G$. If $s_i \in W$ is a simple reflection, let $P_i$ denote 
the associated minimal parabolic subgroup containing $B$. Then the quotient space 
$P_i/B$ is isomorphic to $\mathbb{P}^1$. 
For a sequence $w=(s_1,\ldots, s_n)$ (where the $s_i$ are not necessarily distinct), let 
$P_w:=P_1 \times \cdots \times P_n$ be the product of the corresponding parabolic subgroups, 
and consider the right action of $B^n$ on $P_w$ given by
\begin{align*}
(p_1,\ldots, p_n)(b_1,\ldots, b_n):=(p_1b_1, b_1^{-1}p_2b_2,b_2^{-1}p_3b_3,\ldots, b_{n-1}^{-1}p_nb_n).
\end{align*}
The Bott-Samelson variety $Z_w$ is the quotient
\begin{align*}
Z_w:=P_w/B^n.
\end{align*}
An alternative description of this quotient can be given as follows. Suppose that $X$ and $Y$ are 
two varieties, such that $X$ is equipped with a right action and $Y$ with a left action of 
$B$. Consider the right action of $B$ on the 
product given by
\begin{align*}
(x,y).b:=(xb,b^{-1}y), \quad (x, y) \in X \times Y, \quad b \in B,
\end{align*}
and let $X \times_B Y:=(X \times Y)/B$ denote the quotient space. 
Then the map $X \times_B Y \to X/B, \quad [(x,y)] \mapsto xB$ exhibits $X \times_B Y$ as a fiber bundle 
over $X/B$ and with fiber $Y$. Now, we can alternatively describe $Z_w$ as 
\begin{align*}
Z_w=(P_1 \times_B \cdots \times_B P_n)/B,
\end{align*}
where $B$ acts on the right on $P_1 \times_B \cdots \times_B P_n$ by
\begin{align*}
[(p_1,\ldots, p_n)].b:=[(p_1,\ldots, p_{n-1}, p_nb)], \quad (p_1,\ldots, p_n) \in P_w, \quad b \in B.
\end{align*}
As a consequence, using the fact that each quotient $P_i/B$ is isomorphic to $\mathbb{P}^1$, 
$Z_w$ is given as an iteration of $\mathbb{P}^1$-bundles. To describe this structure in more 
detail, let, for $j \in \{1,\ldots, n \}$,  $w[j]$ denote the truncated 
sequence $(s_1,\ldots, s_{n-j})$, and let $Z_{w[j]}:=P_{w[j]}/B^{n-j}$ denote 
the associated Bott-Samelson variety. 
Then the projections $P_w \to P_{w[j]}$ are $B^n$-equivariant, where 
$B^n$ acts on $P_{w[j]}$ by the factor $B^{n-j}$, and thus induce a projections 
$\pi_{w[j]}: Z_w \to Z_{w[j]}$, 
which can be factorized as a sequence of $\mathbb{P}^1$-fibrations 
\begin{align*}
\pi_{w[1]}: Z_w \to Z_{w[1]} \to \cdots \to Z_{w[j]}.
\end{align*}

Let $\pi: Z_w \to P_1/B$ denote the composition of all these projections, i.e., 
$\pi$ is the projection morphism onto $P_1/B$ defined by the description of 
$Z_w$ as the bundle
\begin{align*}
Z_w:=P_1 \times_B (P_2 \times_B \times \cdots \times_B P_n).
\end{align*}

Now, each $\mathbb{P}^1$-bundle admits a natural section as follows. Let 
$w(j):=(s_1,\ldots, \hat{s_j}, \ldots, s_n)$, 
so that $P_{w(j)}$ embeds naturally as a subgroup of $P_w$. The embedding 
$\sigma_{w, j}^0: P_{w(j)} \to P_w$ is $B^{n-1}$-equivariant, and thus 
induces an embedding
\begin{align*}
	\sigma_{w, j}: Z_{w(j)} \to Z_{w}
\end{align*}
of $Z_{w(j)}$ as a divisor in $Z_w$ such that the divisors $Z_{w(j)}, \, j=1,\ldots, n$, intersect 
transversely in a point.
In particular, $\sigma_{w,n}: Z_{w(n)} \cong Z_{w[1]} \to Z_w$ defines a section of 
the $\mathbb{P}^1$-bundle $\pi_{w[1]} Z_w \to Z_{w[1]}$, and identifies $Z_{w[1]}$ with a divisor 
which is transversal to the fibers of $\pi_{w[1]}$.
Now, the Picard group $\mbox{Pic}(Z_w)$ splits as the direct sum 
\begin{align*}
	\mbox{Pic}(Z_w) \cong \mbox{Pic}(Z_{w[1]}) \oplus \Z, 
\end{align*}
where $\Z$ is identified with the subgroup generated by the line bundle 
$\mathcal{O}_{Z_w}(Z_{w(n)})$. Iterating the above splitting yields that 
the line bundles $\mathcal{O}_{Z_w}(Z_{w(j)}), \, j=1,\ldots, n$, 
define a basis for $\mbox{Pic}(Z_w)$. Clearly, they are all effective. 
Conversely, if $w$ is a reduced sequence, i.e., if the length of the product 
$\overline{w}:=s_1\cdots s_n$ equals 
$n$, a divisor 
\begin{align*}
m_1Z_{w(1)}+\cdots +m_n Z_{w(n)}, \quad m_1,\ldots, m_n \in \Z,
\end{align*}
is effective if and only if $m_1,\ldots, m_n \geq 0$ (cf. \cite[Prop. 3.5]{LT04}). 
The basis $\{Z_{w(1)},\ldots, Z_{w(n)}\}$ 
is called the \emph{effective basis} for $\mbox{Pic}(Z_w)$.
Notice that, since $Z_{w(n)}$ defines a section of the bundle $\pi_{w[1]}$, 
the restricted divisors
\begin{align}
Z_{w(1)} \cdot Z_{w(n)}, \ldots, Z_{w(n-1)} \cdot Z_{w(n)}
\label{E: rbasisbs}
\end{align}
form the effective basis for $Z_{w[1]} \cong Z_{w(n)}$.

\subsection{The vertical flag}\label{ss:VerticalFlag}
We also recall the so-called $\mathcal{O}(1)$-basis for $\mbox{Pic}(Z_w)$ 
defined as follows. Each product $P_1 \times \cdots \times P_k$, $k=1,\ldots, n$, 
defines a morphism 
\begin{align*}
\varphi_k: Z_{w[n-k]} \to G/B, \quad [(p_1,\ldots, p_k)] \mapsto p_1p_2\cdots p_k B.
\end{align*}
Put $\mathcal{O}_{w[n-k]}(1):=\varphi_k^*L_{\omega_\alpha}$, 
where $\alpha$ is simple root corresponding to the simple reflection $s_k$,  
$\omega_\alpha$ is its fundamental weight, viewed as a character of $B$, and 
$L_{\omega_\alpha}:=G \times_B \C$ is the associated line bundle over the flag variety $G/B$. 
Let $\mathcal{O}_k(1):=\pi_{w[n-k]}^*\mathcal{O}_{w[n-k]}(1)$. The line 
bundles $\mathcal{O}_1(1),\ldots, \mathcal{O}_n(1)$, being pullbacks of 
globally generated line bundles, are then globally generated and form 
a basis for $\mbox{Pic}(Z_w)$. Moreover, a line 
bundle $\mathcal{O}_1(1)^{m_1} \otimes \cdots \otimes \mathcal{O}_n(1)^{m_n}$ is 
very ample (nef) if and only if $m_1,\ldots, m_n>0$ ($m_1,\ldots, m_n \geq 0$) (
\cite[Thm. 3.1]{LT04}).
Notice here that the morphism $\varphi_k$ above is 
induced by the $B$-equivariant multiplication map 
\begin{align*}
P_1 \times \cdots \times P_k \to G, \quad (p_1,\ldots, p_k) \mapsto p_1p_2\cdots p_k, 
\end{align*}
where $B$ acts on $P_1 \times \cdots \times P_k$ by the right multiplication on the 
factor $P_k$. We can therefore also view the line 
bundle $\mathcal{O}_{w[n-k]}(1)$ over $Z_{w[n-k]}$ as the 
product 
\begin{align*}
 \mathcal{O}_{w[n-k]}(1)= P_1 \times \cdots \times P_k \times_{B^k} \C, 
\end{align*}
where $B^k$ acts on the product $P_1 \times \cdots \times P_k$ by 
\begin{align*}
(p_1,\ldots, p_k).(b_1,\ldots, b_k):=(p_1b_1, b_1^{-1}p_2b_2,\ldots, b_{k-1}^{-1}p_kb_k), 
\end{align*}
and on $\C$ by the character
\begin{align*}
(b_1,\ldots, b_k) \mapsto \omega_\alpha(b_k).
\end{align*}
Thus, the sections of the sheaf $\mathcal{O}_{w[n-k]}(1)$ over an open set $U \subseteq Z_{w[n-k]}$ 
correspond to the the regular functions $f$ on the $B^k$-invariant open subset 
$$\widetilde{U}_k:=\{(p_1,\ldots, p_k) \in P_1 \times \cdots \times P_k \mid 
[(p_1,\ldots, p_k)] \in U\}$$
satisying the $B^k$-equivariance property 
\begin{align*}
 &f(p_1b_1, b_1^{-1}p_2b_2,\ldots, b_{k-1}^{-1}p_kb_k)=\omega_\alpha(b_k)^{-1}f(p_1,\ldots, p_k),\\
 &(p_1,\ldots, p_k) \in \widetilde{U}_k, \quad (b_1,\ldots, b_k) \in B^k.
\end{align*}
It follows that sections of the sheaf $\mathcal{O}_k(1)$ over the open subset $\pi_{w[n-k]}^{-1}(U)$ then
correspond to the regular functions $f$ on the $B^n$-invariant open subset 
$$\widetilde{U}:=\{(p_1,\ldots, p_n) \in P_1 \times \cdots \times P_n \mid [(p_1,\ldots, p_k)] \in U\}$$
satisfying the $B^n$-equivariance property
\begin{align*}
 &f(p_1b_1, b_1^{-1}p_2b_2,\ldots, b_{n-1}^{-1}p_nb_n)=\omega_\alpha(b_k)^{-1}f(p_1,\ldots, p_n),\\ 
 &(p_1,\ldots, p_n) \in \widetilde{U}, \quad (b_1,\ldots, b_n) \in B^n.
\end{align*}
In other words, $\mathcal{O}_k(1)$ is the line bundle 
\begin{align*}
\mathcal{O}_k(1)=P_1 \times \cdots \times P_n \times_{B^n} \C,
\end{align*}
where $B^n$ acts on $\C$ by the 
character
\begin{align*}
\xi_k: B^n \rightarrow \C^\times, \quad \xi_k(b_1,\ldots, b_n):=\omega_\alpha(b_k).
\end{align*}
Since the $\mathcal{O}_k(1)$, $k=1,\ldots, n$, form a basis for the Picard group, we 
see that each line bundle corresponds to a unique character $\xi_1^{m_1}\cdots \xi_n^{m_n}$, 
for an $(m_1,\ldots, m_n) \in \Z^n$ in such a way that the 
sections of the sheaf $\mathcal{O}_1(1)^{m_1}\otimes \cdots \otimes \mathcal{O}_n(1)^{m_n}$
over an open subset $U \subseteq Z_w$ correspond to the 
regular functions $f$ on the $B^n$-invariant open subset
$$\widetilde{U}:=\{(p_1,\ldots, p_n) \in P_1 \times \cdots \times P_n \mid [(p_1,\ldots, p_k)] \in U\}$$
satisfying the $B^n$-equivariance property
\begin{align}
 &f(p_1b_1, b_1^{-1}p_2b_2,\ldots, b_{n-1}^{-1}p_nb_n)=\xi_1(b_1)^{-m_1} \cdots \xi_n(b_n)^{-m_n}f(p_1,\ldots, p_n), 
 \nonumber\\
 &(p_1,\ldots, p_n) \in \widetilde{U}, \quad (b_1,\ldots, b_n) \in B^n. \label{E: equivprop}
\end{align}
Thus, every line bundle $L$ on $Z_w$ is of the form 
\begin{align*}
L=P_1 \times \cdots \times P_n \times_{B^n} \C,
\end{align*}
where $B^n$ acts on $\C$ by the character $\xi_1^{m_1} \cdots \xi_n^{m_n}$, for a unique 
$(m_1,\ldots, m_n) \in \Z^n$. Let $\mathcal{O}(m_1,\ldots, m_n)$ denote the line bundle 
corresponding to $(m_1,\ldots, m_n)$. In particular, each line bundle $L=\mathcal{O}(m_1,\ldots, m_n)$ 
admits an action of $P_1$ as bundle automorphisms by 
\begin{align*}
p.[((p_1,\ldots, p_n), z)] \mapsto [((pp_1,p_2,\ldots, p_n), z)], 
\end{align*}
which is clearly well-defined since the left multiplication of $P_1$ on the 
$P_1$-factor in $P_1 \times \cdots \times P_n$ commutes with the $B^n$-action on this product. 
The induced representation of $P_1$ on the space of global sections $H^0(Z_w, \mathcal{O}(m_1,\ldots, m_n))$ is given by 
\begin{align}
(p.f)(p_1,\ldots, p_n):=p.(f(p^{-1}p_1,p_2,\ldots, p_n)), 
\label{E: repsections}
\end{align}
for $\quad p \in P_1$ and $(p_1,\ldots, p_n) \in P_1 \times \cdots \times P_n$, 
where we have used the identification \eqref{E: equivprop} of sections with equivariant regular 
functions on $P_1 \times \cdots \times P_n$. Clearly, the $B^n$-equivariance property 
is preserved by the left action of $P_1$, so that this indeed defines a representation of 
$P_1$ on $H^0(Z_w, \mathcal{O}(m_1,\ldots, m_n))$.

Consider now the fibration 
$$
	\pi: Z_\omega \to P_1/B\cong \mP^1 
$$
given by mapping $[(p_1,\dots,p_n)]$ to the class $[p_1]=p_1\cdot B$. 
All of its fibers are isomorphic to the Bott-Samelson variety 
$Z_{[s_2,\dots,s_n]}$. Note that $B$ operates on the quotient $P_1/B$ as
the upper triangular matrices acts on $\mP^1$, 
$$
	\left(\begin{matrix}a&b\\0&c\end{matrix}\right)  [z_0:z_1] = [az_0+bz_1: cz_1],
$$
and thus with exactly one fixed point $p_0$. 
Denote by $Y$ the fiber over $p_0$. 
Note also that for any $p\in P_1$ and $[p_1,\dots,p_n]\in Z_\omega$ we have
\be
	\pi(p.[p_1,\dots,p_n]) &=& \pi([pp_1,p_2,\dots,p_n])\\
												&=& pp_1B = p.\pi([p_1,\dots,p_n]), 
\ee
i.e, $\pi$ is $P_1$-equivariant, and hence $P_1$  
acts as automorphisms of the fiber bundle $\pi$.

We can reiterate this construction to obtain a natural 
flag of Bott-Samelson varieties
$$
	Z_\omega\supseteq Y_1\supseteq \dots\supseteq Y_n
$$
where $Y_i$ is given as the fiber of  the corresponding $\mathbb{P}^1$-bundle 
$Y_{i-1} \to \mathbb{P}^1$ over the $B$-fixed point.
We call this flag the \emph{vertical flag} on $Z_\omega$.

\subsection{Bott-Samelson varieties as Mori dream spaces}
Now assume that $w$ is a reduced sequence. Recall that 
the product map $P_w \to G, (p_1,\ldots, p_n) \to p_1 \cdots p_n$ induces 
a morphism 
\begin{align*}
p_w: Z_w \to Y_{\overline{w}}:=\overline{B\overline{w}B}
\end{align*}
into the Schubert subvariety $Y_{\overline{w}}$ of the flag variety $G/B$, and that 
this morphism is in fact birational. Moreover, it is $B$-equivariant 
with respect to the left action of $B$ on $Z_w$ defined 
by
\begin{align*}
b[(p_1,\ldots, p_n)]:=[(bp_1,p_2, \ldots, p_n)], \quad (p_1,\ldots, p_n) \in P_w, \quad b \in B.
\end{align*}
In particular, if $\overline{w}$ is the longest element of the Weyl 
group, $p_w$ defines a birational map $Z_w \to G/B$. 

The following theorem is the crucial result for our application of the theory 
from the following section.
\begin{thm} \label{T: BSMDS}
Let $G$ be a complex reductive group with Weyl group $W$, and let $Z=Z_w$ be a Bott-Samelson 
variety defined by a reduced sequence $w$ of simple reflections.
Then $Z$ admits a divisor $\Delta$ such that $(Z, \Delta)$ is a log Fano pair. In particular, 
$Z$ is a Mori dream space.
\end{thm}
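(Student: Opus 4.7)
The plan is to produce an effective $\mathbb{Q}$-divisor $\Delta$ on $Z$ with the properties that $(Z,\Delta)$ is klt and $-(K_Z+\Delta)$ is ample. Once this is achieved, the pair $(Z,\Delta)$ is log Fano by definition, and the Mori dream space property follows from the theorem of Birkar--Cascini--Hacon--McKernan.

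The natural candidate for $\Delta$ is built from the sections of the iterated $\mathbb{P}^1$-bundle structure. Concretely, I would take
\[
\Delta \;=\; (1-\varepsilon)\sum_{j=1}^{n} Z_{w(j)}
\]
for a small rational number $\varepsilon>0$. Two features recalled in Section \ref{s: BS} make this choice work on the singularity side: first, the divisors $Z_{w(j)}$ are smooth (each is itself a Bott-Samelson variety) and, as explicitly stated in the excerpt, they intersect transversely in a single point; hence $\sum Z_{w(j)}$ is a simple normal crossings divisor on the smooth variety $Z_w$. Since all coefficients of $\Delta$ are strictly less than $1$, the pair $(Z,\Delta)$ is automatically klt.

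The main work therefore consists in verifying that $-(K_Z+\Delta)$ is ample for some choice of $\varepsilon>0$. For this I would first compute $-K_Z$ inductively using the iterated $\mathbb{P}^1$-bundle structure $\pi_{w[j]}\colon Z_{w[j-1]}\to Z_{w[j]}$: on each stage, the relative canonical formula together with the fact that $Z_{w(n-j+1)}$ is a section gives a recursive expression of $-K_{Z_{w[j-1]}}$ as $2\,Z_{w(n-j+1)}$ plus the pullback of a determinant line bundle from $Z_{w[j]}$. Iterating yields an expression
\[
-K_Z \;=\; \sum_{j=1}^{n} Z_{w(j)} \;+\; M,
\]
where $M$ is a $\mathbb{Z}$-linear combination of the globally generated bundles $\mathcal{O}_k(1)$ with strictly positive coefficients (the positivity coming from the fact that the relevant character is the sum of positive multiples of fundamental weights associated to the $s_k$). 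Consequently
\[
-(K_Z+\Delta) \;=\; \varepsilon\sum_{j=1}^n Z_{w(j)} \;+\; M.
\]
Since $M$ is already ample by the Lauritzen--Thomsen criterion \cite[Thm.~3.1]{LT04} (all $m_k$ strictly positive), and since adding a small multiple of an effective divisor to an ample divisor preserves ampleness, $-(K_Z+\Delta)$ is ample for all sufficiently small $\varepsilon>0$.

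The step I expect to be the main obstacle is the inductive computation of $-K_Z$, and in particular checking that the \emph{correction term} $M$ lies in the strictly ample cone rather than just on its boundary; this requires keeping careful track of how the relative dualizing sheaves of the successive $\mathbb{P}^1$-bundles contribute, and how their classes are expressed in the $\mathcal{O}_k(1)$-basis versus the effective basis $\{Z_{w(j)}\}$. Once this bookkeeping is done, the rest of the argument is formal: klt plus ample anticanonical gives log Fano, and BCHM then yields the Mori dream space conclusion.
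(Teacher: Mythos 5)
Your overall strategy matches the paper's: build $\Delta$ as a $\mathbb{Q}$-linear combination of the effective basis divisors $Z_{w(j)}$ with coefficients in $(0,1)$, get the klt property for free from the simple normal crossings structure of $\sum_j Z_{w(j)}$ on the smooth variety $Z$, and verify anti-ampleness of $K_Z+\Delta$ via the anticanonical formula. However, the step you yourself flag as the ``main obstacle'' is in fact a genuine gap, and it does not close the way you hope.

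The anticanonical formula (cf.\ \cite[Lemma 5.1]{LT04}, used in the paper) gives
\[
-K_Z \;=\; \sum_{j=1}^n Z_{w(j)} \;+\; p_w^*(D_\rho),
\]
where $p_w\colon Z_w\to Y_{\overline w}$ is the Bott--Samelson resolution of the Schubert variety and $D_\rho$ is an ample divisor on $Y_{\overline w}$. Thus your correction term is $M=p_w^*(D_\rho)$, which is merely \emph{nef}: being the pullback of an ample class under a birational morphism, it is ample only when $p_w$ is an isomorphism, i.e.\ only when $Y_{\overline w}$ is already smooth. In particular $M$ is \emph{not} in general a strictly positive combination of the $\mathcal{O}_k(1)$ (such a combination would be ample by \cite[Thm.~3.1]{LT04}). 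With your uniform choice $\Delta=(1-\varepsilon)\sum_j Z_{w(j)}$ you would get $-(K_Z+\Delta)=\varepsilon\sum_j Z_{w(j)}+M$, which is (small effective) $+$ (nef), and there is no reason for this to be ample.

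The paper's fix is to allow non-uniform coefficients. Choose positive integers $a_1,\dots,a_n$ so that $\sum_j a_j Z_{w(j)}$ is ample (possible since $\{Z_{w(j)}\}$ is a basis of $\mathrm{Pic}(Z)_\mathbb{Q}$ and the ample cone is open), take $N$ large so that all $a_j/N<1$, and set $\Delta=\sum_j(1-a_j/N)Z_{w(j)}$. Then
\[
-(K_Z+\Delta)\;=\;\tfrac1N\sum_j a_j Z_{w(j)}\;+\;p_w^*(D_\rho),
\]
which is ample as the sum of an ample and a nef divisor. So the decomposition you wrote down is the right one, but you need to absorb the ampleness into the boundary part $\sum a_j Z_{w(j)}$ of the sum rather than into $M$; the correction term $M$ contributes only nef positivity, not ample.
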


\begin{proof}
Let $Y=G/B$, and let Let $D_\rho$ be the divisor on $Y_{\overline{w}}$ which corresponds to 
the restriction to $Y_{\overline{w}}$ of the square root of the anticanonical bundle 
of $Y$. Then $D_\rho$ is an ample divisor on $Y_{\overline{w}}$, so that 
$p_w^*(D_\rho)$ is a nef divisor on $Z$.

In order to facilitate the notation, let 
$\{D_1,...,D_n\}$ be the basis of effective divisors for $\mbox{Pic}(Z)$.
Now choose integers $a_1,...,a_n>0$ so that $\sum_{i=1}^n a_iD_i$
is an ample divisor. Then, for every $N>0$, the divisor 
$p_w^*(-D_{\rho})-\sum_{i=1}^n a_i/N D_i$
is anti-ample. 
Now let $N \in \N$ be so big that $a_i/N<1$ for every $i$, and
put $$\Delta:=\sum_{i=1}^n(1-a_i/N) D_i.$$
If $K_Z$ is the canonical divisor of $Z$, we then have that 
\begin{align*}
K_Z+\Delta=\pi^*(-D_{\rho})-\sum_{i=1}^n D_i+\sum_{i=1}^n(1-a_i/N) D_i
=\pi^*L_{-\rho}-\sum_{i=1}^n (a_i/N) D_i
\end{align*}
(cf. \cite[Lemma 5.1]{LT04}) 
is anti-ample. Since $Z$ is nonsingular, and all subsets of the set of smooth divisors 
$\{D_1,\ldots, D_n\}$ intersect transversely and smoothly, the pair $(Z, \Delta)$ thus 
defines a log Fano pair.
\end{proof}

\begin{rem}
In the context of the above theorem it is worth mentioning
an analogous result by Anderson and Stapledon (\cite{AS}) on the log Fano property 
of Schubert varieties. 
\end{rem}

%

\section{Good flags on Mori dream spaces}\label{s: main}

In this section we prove the main theorem of this paper. The main
objective is to establish conditions on a flag on a Mori dream space, such
that its global Okounkov body is rational polyhedral.

First let us recall that a Mori dream space $X$ is a normal $\Q$-factorial variety 
such that  $\mbox{Pic}(X)_\Q\cong N^1(X)_\Q$ and with a Cox ring $\Cox(X)$ which is a finitely 
generated $\C$-algebra. We make use of the theory of Mori dream spaces 
developed by Hu and Keel in \cite{hk} and we refer the reader to this beautiful paper 
for a detailed investigation of Mori dream spaces. 

Note that  for any effective divisor $D$ on
a Mori dream space $X$, the ring of sections $R(X,D):=\bigoplus_{k \geq 0} H^0(X,\mathcal{O}_X(kD))$ is
finitely generated, so we obtain a natural rational map 
$$
	f_D: X \dasharrow \mbox{Proj}(R(X,D))
$$
which is regular outside the stable base locus of $D$. One obtains an
equivalence relation of effective divisors as follows: two effective divisors $D$ and $D'$ 
are \emph{Mori-equivalent} if  up to isomorphism they yield the same rational maps.
Hu and Keel prove (\cite[Proposition 1.11]{hk}) that there are only finitely many 
equivalence classes, indexed by 
contracting rational maps $f:X\dasharrow X'$ and that the closure $\Sigma_f$ of a 
maximal dimensional equivalence class can be described as the closed convex 
cone spanned by $f$-exceptional rays together with the face $f^\ast(\Nef(X'))$ 
of the moving cone. These subcones $\Sigma_f$, which 
decompose the pseudo-effective cone $\Eff(X)$, 
are in the remainder of this paper following \cite{hk} referred to  as \emph{Mori-chambers}.

Now, let $f: X \dasharrow X'$
be a small $\Q$-factorial modification defining an isomorphism 
$$f\mid_U: U \rightarrow V$$
between open subsets $U \subseteq X, V \subseteq X'$ with 
complements of codimension at least two. 
Then $f^*$ induces an isomorphism of 
pseudo-effective cones $\overline{\textrm{Eff}}(X) \cong \overline{\textrm{Eff}}(X')$ as well as an isomorphism 
$\textrm{Cox}(X) \cong \textrm{Cox}(X')$ of 
Cox rings. We further recall that $f$ is induced by GIT in the 
following manner. 

Let $R=\mbox{Cox}(X)$. The variety $X$ can be written as the GIT-quotient 
$X=\spec(R)^{ss}(\chi)//G$, where $G$ is the complex torus of rank equal to 
the Picard number of $X$, and $\spec(R)^{ss}(\chi)$ denotes the 
set of semistable points in $\spec(R)$ with respect to the character $\chi$ 
of $G$. We even have that $\spec(R)^{ss}(\chi)=\spec(R)^s(\chi)$; the 
set of $\chi$-semistable points equals the set of $\chi$-stable points, so that the 
quotient is even a geometric quotient.  Also $X'$ is a geometric quotient 
of the set of stable points with respect to a character $\chi'$: 
$X'=\spec(R)^s(\chi')//G$. Let $\pi_\chi: \spec(R)^s(\chi) \to X$ and
$\pi_{\chi'}: \spec(R)^s(\chi') \to X'$ denote the respective 
quotient morphisms. In both cases the sets of 
unstable points (with respect to $\chi$ and $\chi'$) are 
of codimension at least two in $\spec(R)$, and the 
rational map $f$ is induced on the level of quotients by the 
inclusion 
$$\spec(R)^s(\chi) \cap \spec(R)^s(\chi') \subseteq \spec(R)^s(\chi')$$
of the subset of common stable points into the set of $\chi'$-stable points.
In particular, the exceptional locus of $f$ equals the complement of the 
domain of definition of $f$ and is given as the image of 
the $\chi'$-unstable and $\chi$-stable points; 
$$\mbox{exc}(f)=\pi_\chi(\spec(R)^s(\chi) \cap \spec(R)^{us}(\chi')),$$
so that $f$ induces an isomorphism 
\begin{align*}
f\mid_U: U \stackrel{\cong}{\to} V, \quad
U&:=\pi_\chi(\spec(R)^s(\chi) \cap \spec(R)^s(\chi')),\\ 
V&:=\pi_{\chi'}(\spec(R)^s(\chi) \cap \spec(R)^s(\chi')).  
\end{align*}

Now let $f: X \dasharrow X'$ be a small $\Q$-factorial modification as above, and 
assume that $Y \subseteq X$ is an irreducible hypersurface 
given as the zero set $Y=Z(s)$ of a section $s \in H^0(X, L)$ of some 
line bundle $L$. Let $L':=(f^{-1})^*L$ denote the corresponding line 
bundle on $X'$, let $s' \in H^0(X', L')$ be the section of $L'$ corresponding to $s$, 
and put $Y':=Z(s')$. The restriction of $f$ to $Y$ then 
defines a birational map $$f_Y: Y \dasharrow Y', $$
yielding the commuting diagram
\begin{align*}
\xymatrix{
 X  \ar@{-->}[r]^{f} & X'\\
 Y \ar[u] \ar@{-->}[r]^{f_Y} & Y', \ar[u]
 }
\end{align*}
where  the vertical arrows denote the respective inclusion morphisms. 
We now have the following simple but useful lemma.
\begin{lemma}
a) The hypersurface $Y' \subseteq X'$ is irreducible.\\

b) If $Y$ intersects the exceptional locus $\mbox{exc}(f)$ properly, 
then $f_Y$ induces an isomorphism of open subsets
\begin{align*}
U \cap Y \stackrel{\cong}{\to} V \cap Y', 
\end{align*}
where $\mbox{codim}_Y(Y \setminus (U \cap Y)) \geq 2$.
\end{lemma}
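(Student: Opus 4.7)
The plan is to exploit the fact that $f$ is an isomorphism $f|_U:U\xrightarrow{\cong} V$ between open subsets whose complements have codimension at least two, combined with the explicit GIT description of $s'$ as corresponding to $s$ under the Cox-ring isomorphism. The essential point is that under this correspondence $s|_U$ is pulled back from $s'|_V$, so $Y\cap U$ and $Y'\cap V$ match up under $f|_U$ set-theoretically; once this is in hand, both parts reduce to codimension bookkeeping.

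For part (a), I would first observe that $Y\cap U$ is open and dense in $Y$: indeed $Y\setminus U = Y\cap(X\setminus U)$, and since $X\setminus U$ has codimension $\geq 2$ in $X$ it cannot contain the hypersurface $Y$ nor any component of it, so $Y\cap U$ is a nonempty open subset of the irreducible variety $Y$, hence irreducible. Next, using that $s'$ is the section corresponding to $s$ under the isomorphism $\Cox(X)\cong\Cox(X')$, the equality $(f|_U)^*(s'|_V)=s|_U$ holds, so $f|_U$ maps $Y\cap U$ isomorphically onto $Y'\cap V$. In particular $Y'\cap V$ is irreducible. The symmetric codimension argument applied to $X'\setminus V$ shows $Y'\cap V$ is open and dense in $Y'$, and therefore $Y'$ is irreducible, proving (a).

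For part (b), the first claim is essentially already proved: the restriction of the isomorphism $f|_U:U\to V$ to the closed subvariety $Y\cap U$ lands isomorphically in $Y'\cap V$, and this is exactly $f_Y|_{U\cap Y}$. It remains to verify the codimension statement $\codim_Y(Y\setminus(U\cap Y))\geq 2$. Here I would use the identification $Y\setminus(U\cap Y)=Y\cap(X\setminus U)=Y\cap\exc(f)$ given by the earlier description of $\exc(f)$ as the complement of the domain of definition of $f$. Since $f$ is a small $\Q$-factorial modification we have $\codim_X\exc(f)\geq 2$, and by the hypothesis that $Y$ meets $\exc(f)$ properly each component of the intersection $Y\cap\exc(f)$ has codimension in $Y$ equal to the codimension of $\exc(f)$ in $X$, i.e.\ at least $2$. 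This yields the required bound.

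The only mild subtlety is making precise the claim that $s$ and $s'$ agree on $U\cong V$, which is what translates $Y\cap U$ into $Y'\cap V$; this follows formally from the definition $L'=(f^{-1})^*L$ and the compatibility of the Cox-ring isomorphism with restriction to the common open of definition. Otherwise the argument is purely a codimension count, so I do not expect any serious obstacle.
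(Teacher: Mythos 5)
Your proof is correct, and for part~(b) it takes essentially the same route as the paper's: identify $Y\setminus(U\cap Y)$ with $Y\cap\exc(f)$ and apply the proper-intersection hypothesis. Where you genuinely diverge is in part~(a). The paper lifts the problem to the total coordinate space $\spec(R)$: it shows that $Z=\pi_\chi^{-1}(Y)$ is irreducible (using that the fibers of the quotient map are closed $G$-orbits of constant dimension $\dim G$), deduces that its closure $\overline{Z}=V(f)$ in $\spec(R)$ is irreducible, and then obtains $Y'$ as the image under $\pi_{\chi'}$ of the irreducible set $V(f)\cap\spec(R)^s(\chi')$. You instead argue entirely downstairs on $X$ and $X'$: since $X\setminus U$ and $X'\setminus V$ have codimension $\geq 2$ and every component of $Y$ and of $Y'$ has codimension $1$, the opens $Y\cap U$ and $Y'\cap V$ are dense, and the identification $(f|_U)^*(s'|_V)=s|_U$ transports irreducibility from $Y\cap U$ to $Y'\cap V$ and hence to $Y'$. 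Your version is shorter and more elementary --- it avoids the GIT picture altogether and only uses the codimension bound on the complements of $U$ and $V$ --- while the paper's version is more structural, keeping everything inside the explicit $\spec(R)$ picture that the surrounding text has set up. One small point worth spelling out in your write-up is the observation that $s'\neq 0$ (so that $Y'\subsetneq X'$ and Krull gives pure codimension $1$), which you are implicitly using when you say the \engqq{symmetric codimension argument} shows $Y'\cap V$ is dense; this follows because $Y'\cap V\cong Y\cap U\neq\emptyset$.
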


\begin{proof}
Consider the restriction $(\pi_\chi)_Z: Z \to Y$ of $\pi_\chi$ to the 
preimage $Z:=\pi_\chi^{-1}(Y) \subseteq \spec(R)^s(\chi)$ of $Y$. 
The fibers of $(\pi_\chi)_Z$, being closed $G$-orbits, are irreducible, and 
they are all of the same dimension, $\mbox{dim}\, G$, since 
all stabilizers of points in $\spec(R)^s(\chi)$ are finite. Since $Y$ is 
irreducible, it then follows that $Z$ is irreducible, and hence also 
the closure $\overline{Z}$ of $Z$ in $\spec(R)$. Moreover, 
$\overline{Z}$ can be written as the zero-set $\overline{Z}=V(f)$, where 
we identify the section $s \in H^0(X, L) \subseteq R$ with a function $f$ on $\spec(R)$. Thus, 
the zero-set $V(f) \subseteq \spec(R)$ is irreducible. 
Now, the zero set $Y'=Z(s')$ is given as 
$Z(s')=\pi_{\chi'}(V(f) \cap \spec(R)^s(\chi'))$ and is  
clearly also irreducible. This proves a).

As for b), the argument above shows that $f_Y$ in general defines an 
isomorphism 
\begin{align*}
U \cap Y=U \cap Z(s) \stackrel{\cong}{\to} V \cap Z(s')=V \cap Y.' 
\end{align*} 
The condition that $Y$ intersect the exceptional locus $\mbox{exc}(f)$ properly 
then shows that the codimension of $Y \setminus (U \cap Y)$ in $Y$ is 
at least two. This finishes the proof. 
\end{proof}

We now turn to Okounkov bodies on a Mori dream space $X$ equipped with an
admissible flag $Y_\bullet$. Our strategy is to deduce properties of the global 
Okounkov body of $X$ from those of Okounkov bodies of line bundles restricted to $Y_1$ 
and to argue inductively.
We are thus particularly interested in a comparison 
of the Okounkov bodies of a graded linear series coming from restricting sections to $Y_1$ and 
of a restricted line bundle. More concretely, for a divisor $D$ on $X$ we consider the restriction map
\begin{align*}
	R: \bigoplus_{k} H^0(X, \mathcal{O}_X(kD)) \to  
			\bigoplus_{k} H^0(Y, \mathcal{O}_Y(k D \cdot Y))
\end{align*}
and hope for an identity
\begin{align}\label{E: idokbodies}
	\Delta_{Y^1_\bullet}(\mbox{im} \, R)=\Delta_{Y^1_\bullet}(D \cdot Y),
\end{align}
where $Y^1_\bullet$ denotes the flag $Y_2\supseteq\ldots\supseteq Y_n$ on $Y_1$.
Note that in case $D$ is ample the above identity holds. This follows from the exact sequence
$$
	H^0(X,\O_X(mD))\to H^0(Y,\O_Y(mD))\to H^1(X,\O_X(mD-Y))
$$  
together with the fact that for large $m$ the last cohomology group is 
trivial by Serre's vanishing theorem. In order to get the desired identity 
for any movable divisor $D$, we will consider the corresponding small 
modification. 

\begin{prop}\label{P:restriction}
Let $X$ be a Mori dream space, and let $Y_\bullet$ be an admissible flag 
of normal subvarieties, write $Y:=Y_1$, and let 
 $Y^1_\bullet$ denote the admissible flag 
$$Y_n \subseteq \cdots \subseteq Y_1=Y$$ of subvarieties of $Y$. 

Let $f: X \dasharrow X'$ be a SQM, 
and let $f_Y: Y \dasharrow Y'$ be the 
induced birational morphism. 
Assume that there exist open subsets $U\subset Y$ and $V\subset Y'$ with 
codim${}_{Y}(Y\setminus U)\ge2$ such that $f_Y:U\to V$ is an isomorphism.

If $D'$ is a nef divisor on $X'$ and $D:=f^*(D')$ is 
the corresponding divisor on $X$, then the identity
\begin{align*}
\Delta_{Y^1_\bullet}(\mbox{im} \, R)=\Delta_{Y^1_\bullet}(D \cdot Y)
\end{align*}
of Okounkov bodies holds.
\end{prop}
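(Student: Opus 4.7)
The plan is to transfer the problem across the SQM $f$ to the nef setting on $X'$, and then reduce to the classical ample case via approximation.

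First, I would identify sections across $f$. Since $f$ restricts to an isomorphism of open subsets of $X$ and $X'$ whose complements have codimension at least two, and both $X$ and $X'$ are normal, Hartogs' theorem yields a canonical isomorphism
\[
H^0(X,\mathcal{O}_X(kD)) \cong H^0(X',\mathcal{O}_{X'}(kD'))
\]
for every $k \geq 0$. The analogous use of Hartogs on $f_Y\colon U \to V$ (invoking normality of $Y$ and $Y'$ together with the codimension-two hypothesis on both sides, the second of which follows by applying the preceding lemma to $f^{-1}$) gives
\[
H^0(Y,\mathcal{O}_Y(kD|_Y)) \cong H^0(Y',\mathcal{O}_{Y'}(kD'|_{Y'})).
\]
These isomorphisms commute with the restriction maps, so $\mathrm{im}(R) \cong \mathrm{im}(R')$, where $R'$ denotes the analogous restriction on $X'$. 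Transferring the flag $Y^1_\bullet$ through $f_Y$ by taking strict transforms, the induced valuations agree on $U$, and therefore the Okounkov bodies on $Y$ coincide with those on $Y'$ computed with respect to the transferred flag.

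With this reduction in hand, it suffices to show $\Delta_{(Y')^1_\bullet}(\mathrm{im}(R')) = \Delta_{(Y')^1_\bullet}(D'|_{Y'})$ on $Y'$. If $D'$ were ample, the short exact sequence
\[
0 \to \mathcal{O}_{X'}(kD' - Y') \to \mathcal{O}_{X'}(kD') \to \mathcal{O}_{Y'}(kD'|_{Y'}) \to 0
\]
combined with Serre vanishing would imply surjectivity of $R'_k$ for $k \gg 0$, so $\mathrm{im}(R')$ and the complete graded linear series of $D'|_{Y'}$ would agree in high degrees, giving the same Okounkov body. For a general nef $D'$, I would argue by ample approximation: fix an ample $\Q$-divisor $A$ on $X'$ and set $D'_\epsilon := D' + \epsilon A$, which is ample for $\epsilon > 0$. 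Applying the ample case to each $D'_\epsilon$ yields the identity for $\epsilon > 0$, and letting $\epsilon \to 0$ with continuity of Okounkov bodies on the big cone (cf.~\cite{LM09}) should give the result for $D'$.

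The main obstacle will be the passage to the limit as $\epsilon \to 0$: while the bodies $\Delta_{(Y')^1_\bullet}(D'_\epsilon|_{Y'})$ vary nicely on the big cone, the restricted images $\mathrm{im}(R'_\epsilon)$ need not depend continuously on $\epsilon$. A cleaner alternative is to argue in terms of volumes: since the volume of an Okounkov body equals (up to a factorial) the volume of the associated graded linear series, the equality of bodies reduces to the equality of the restricted volume $\mathrm{vol}_{X'|Y'}(D')$ and the complete volume $\mathrm{vol}_{Y'}(D'|_{Y'})$. By the theory of restricted volumes, this holds provided $Y'$ is not contained in the augmented base locus of $D'$, a condition that should be extractable from the proper-intersection hypothesis on $Y$ with $\exc(f)$ translated through the SQM.
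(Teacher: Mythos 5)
Your overall strategy (move the problem across the SQM, handle the ample case via the restriction sequence and Serre vanishing, then pass to nef by approximation) is in the same spirit as the paper's, but there are several genuine gaps, and the paper actually works in a different, more economical way in the key steps.

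First, the Hartogs argument for $H^0(Y,\mathcal O_Y(kD|_Y)) \cong H^0(Y',\mathcal O_{Y'}(kD'|_{Y'}))$ requires $Y'$ to be normal, and it is not: $Y'$ is the zero locus of a section on $X'$ and the hypotheses give you no control over its singularities. The paper explicitly flags this and introduces the normalization $\pi\colon \widetilde{Y'}\to Y'$ precisely to work around it. Second, your remark that $\mathrm{codim}_{Y'}(Y'\setminus V)\ge 2$ "follows by applying the preceding lemma to $f^{-1}$" is not justified by the stated hypotheses; the proposition only assumes codimension $\ge 2$ on the $Y$-side, and the lemma requires a proper-intersection hypothesis against $\exc(f^{-1})$ that you have not verified for $Y'$. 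Third, "transferring the flag $Y^1_\bullet$ through $f_Y$ by taking strict transforms" quietly assumes the strict transforms form an admissible flag on $Y'$, which again runs into non-normality of $Y'$ and the possibility that inner flag elements $Y_i$ ($i\ge 2$) land badly under $f_Y$.

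The paper's argument avoids all of this. It does not attempt to transport the flag to $Y'$ at all, nor does it try to match section spaces. Instead it uses the a priori inclusion $\Delta_{Y^1_\bullet}(\mathrm{im}\,R)\subseteq\Delta_{Y^1_\bullet}(D\cdot Y)$ and reduces to an equality of \emph{volumes}, which is then proved by pulling back the ample bundle $\mathcal O_{Y'}(1)$ along the birational contraction $f_Y^{-1}\circ\pi$ from the normalization $\widetilde{Y'}$, using that an ample divisor has no fixed divisorial component supported on $Y'\setminus V$. This entirely sidesteps both the normality and the $Y'$-side codimension issue.

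Finally, for the nef case you correctly identify that the restricted images $\mathrm{im}(R'_\epsilon)$ need not vary continuously, but the "cleaner alternative" you sketch with restricted volumes is left as a hope. The paper instead uses closedness and convexity of the global Okounkov body $\Delta_{Y_\bullet}(X)$ (\cite[Theorem 4.5]{LM09}, applied via Theorem 4.26 loc.\ cit.): given $a\in\Delta_{Y^1_\bullet}(D\cdot Y)$ it approximates $(a,[D\cdot Y])$ by points $(a_i,[D_i\cdot Y])$ coming from ample $D_i'$, uses the ample case to see $((0,a_i),[D_i])\in\Delta_{Y_\bullet}(X)$, and passes to the limit inside the closed cone to conclude $(0,a)\in\Delta_{Y_\bullet}(D)$. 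You should replace your ample-approximation step with this closedness argument on the global body, which is a fact already available to you.
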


\begin{proof}
Since $S:=\mbox{im} R$ is a graded linear subseries of  
$\bigoplus_{k} H^0(Y, \mathcal{O}_Y(k D \cdot Y))$, the claimed identity 
of Okounkov bodies follows if we can show that both series have the same volume. 

We have a commutative diagram
\begin{align*}
\xymatrix{
 X  \ar@{-->}[r]^{f} & X'\\
 Y \ar[u] \ar@{-->}[r]^{f_Y} & Y', \ar[u]
 }
\end{align*}
where  the vertical arrows denote the respective inclusion morphisms 
By assumption,
there are open subsets $U\subset Y$ and $V\subset Y'$ with 
codim${}_{Y}(Y\setminus U)\ge2$ such that $f_Y:U\to V$ is an isomorphism.



Let us first assume that $D'$ is ample. 
Denote the restricted divisors $D\cdot Y$ and $D'\cdot Y'$ by $D_Y$ and $D'_{Y'}$, respectively.

Since $D$ defines the map $f$, we have the identity
$$
	D = f^\ast(\mathcal O_{X'}(1)).	
$$
This restricts to $U$ as the identity $f_Y^\ast(\mathcal O_{Y'}(1))|_U =D_Y|_U$. 
By assumption, $Y\setminus U$ has codimension at least $2$ in $Y$, 
so that we get 
$$
	f_Y^\ast(\mathcal O_{Y'}(1)) = D_Y.
$$
Being ample, $\mathcal O_{Y'}(1)$ does not have a divisorial base component in $Y'\setminus V$,
so we can represent it as a divisor $A = \overline{A\cap V}$. Therefore, under $f_Y^{-1}$ the divisor 
$D_Y$ pulls back to $A$.

Since $Y'$ might not be normal we consider the normalization
$$
	\pi: \widetilde{Y'} \to Y'.
$$
Note that $\pi$ 
defines an isomorphism $\pi^{-1}(V) \to V$ since $V$ is contained in the normal locus of $Y'$. 
In particular, we have the identity
$$
	(f_Y^{-1} \circ \pi)^\ast(D_Y) = \pi^\ast f_Y^{-1}{}^\ast(D_Y) = \pi^\ast(A).
$$
Since $f_Y^{-1}\circ \pi$ is a contracting birational map between normal varieties, 
this implies the identity of volumes
$$
	\vol(D_Y) = \vol(\pi^\ast(A)).
$$
Since $\pi$ is a birational morphism, the right hand side is just $\vol(A)$. 
On the other hand, 
$$
	\vol(A) = \vol(\mathcal O_{Y'}(1)) = \vol(S),
$$
since $Y'=\mbox{Proj}(S)$. This proves the claim in  case $D'$ is ample.

If $D'$ is merely nef, write $[D']$ as a limit $[D']=\lim_{i \rightarrow \infty} [D'_i]$
of numerical equivalence classes of ample divisors $D'_i, i \in \N$. Put 
$D_i:=f^*D_i$. Then $[D]=\lim_{i \rightarrow \infty} [D_i]$. Now, let 
$a \in \Delta_{Y^1_\bullet}(D \cdot Y)$. 
Then $(a, [D \cdot Y]) \in \Delta_{Y^1_\bullet}(Y)$. Now choose points $a_i \in 
\Delta_{Y^1_\bullet}(Y \cdot D_i)$ so that 
$(a, [D \cdot Y])=\lim_{i \rightarrow \infty}(a_i, [D_i \cdot Y])$. 
By the above, the identity \eqref{E: idokbodies} holds when $D$ is replaced by $D_i, i \in \N$. 
Hence, by \cite[Theorem 4.26]{LM09}, $$((0,a_i), [D_i]) \in \Delta_{Y_\bullet}(X)$$ for each $i$, so 
that $$((0,a), [D])=\lim_{i \rightarrow \infty} ((0,a_i), [D_i]) \in \Delta_{Y_\bullet}(X),$$ i.e., 
$(0,a) \in \Delta_{Y_\bullet}(D)$. This shows that the 
identity \eqref{E: idokbodies} holds for an arbitrary nef divisor $D'$ on $X'$. 

\end{proof}


In order to apply the above proposition to obtain information
on the structure of the global Okounkov body of a Mori dream space, 
we need the following construction formulated in a more general context.
Here $Y_\bullet$ can be any admissible flag on a normal projective variety $X$.

If $s \in H^0(X, \mathcal{O}_X(m_1D_1+\cdots +m_nD_n))$ is a section 
which does not vanish on $Y_1$, so that $\nu(s)=(\nu_1(s),\ldots, \nu_n(s))$ with 
$\nu_1(s)=0$, then the restriction of $s$ to $Y_1$ defines a section 
of the line bundle $\mathcal{O}_{Y_1}(D \cdot Y_1)$ over $Y_1$ with value 
$\nu^1(s)=(\nu_2(s),\ldots \nu_n(s))$ with respect to the truncated
flag 
\begin{align}
Y_n \subseteq \cdots \subseteq Y_{1}
\label{E: shortflag}
\end{align}
on $Y_1$.

For a finite set $F_1,\ldots, F_r$ of movable divisors on $X$,  let 
$$	
	\Gamma(F_1,\ldots, F_r) \subseteq \mbox{Mov}(X)
$$ 
be the semigroup generated by the divisors $F_1,\ldots, F_r$, and 
let 
$$
C(F_1,\ldots, F_r) \subseteq \mbox{Mov}(X)
$$
be the cone generated by $F_1,\ldots, F_r$.
Define the semigroups
\begin{align*}
S(F_1,\ldots, F_r):=\{(\nu(s),  [D]) \in \N_0^n \times \Gamma(F_1,\ldots, F_r) \mid s \in H^0(X, \mathcal{O}_X(D))&,&\\
D \in \Gamma(F_1,\ldots, F_r), \, \nu_1(s)=0\}
\end{align*}
and
\begin{align*}
S_1(F_1,\dots,F_r):=\{(\nu^1(s), [D \cdot Y_1]) \in \N_0^{n-1} \times N^1(Y_1)_\R \mid
[D] \in\Gamma(F_1,\ldots, F_r) &,&\\ 
s \in H^0(Y_1, \mathcal{O}_{Y_1}(D \cdot Y_1))\},
\end{align*}
as well as the morphism of semigroups
\begin{align*}
q_0: S \rightarrow S_1, \quad q_0(\nu(s), [D]):=(\nu^1(s), [D  \cdot Y_1]),
\end{align*}
which extends to the linear map
\begin{align}
q: \qquad \R^n \oplus V(F_1,\ldots, F_r) &\to \R^{n-1} \oplus N^1(Y_1)_\R, \label{E: linmapq}\\
((x_1,\dots,x_n), [D]) &\mapsto ((x_2,\dots, x_n), [D \cdot Y_1]), \nonumber
\end{align}
where $V(F_1,\ldots, F_r) \subseteq N^1(X)_\R$ is the $\R$-vector space 
generated by the numerical equivalence classes $[F_1],\ldots, [F_r]$. Furthermore, 
we denote by $C(S(F_1,\dots,F_r))$ and $C(S_1(F_1,\dots,F_r))$
the closed convex cones in $\R^n\times N^1(X)_\R$ and $\R^{n-1} \times N^1(Y_1)_\R$ 
spanned by the semigroups
$S(F_1,\dots,F_r)$ and $S_1(F_1,\dots,F_r)$, respectively.

We now recall that for a Mori dream space $X$
the pseudo-effective cone $\overline{\mbox{Eff}}(X)$ is the union of finitely 
many Mori chambers, $\Sigma_1,\ldots, \Sigma_m$, where each 
Mori chamber $\Sigma_j$ is the convex hull of finitely many integral divisors $D^j_1,\ldots, D^j_{\ell_j}$. 
More concretely, by \cite[Proposition 1.1]{hk}, the chambers are in correspondence to contracting
birational maps $f_j:X\dasharrow X_j$ with image a Mori dream space, and are given
as the convex cone spanned by $f_j^*(\Nef(X_j))$ together with the rays spanning the 
exceptional locus $\exc(f_j)$. The corresponding decomposition of a divisor $D\in\Sigma_j$ 
is exactly its decomposition into its fixed and movable parts. We can thus reorder  the 
divisors spanning each chamber in such a way that the first $n_j$ of them are movable 
and the remaining ones are fixed. 
Let $\sigma^j_i \in H^0(X, \mathcal{O}_X(N^j_i))$ be the defining 
section of $D^j_i$, for $j=1,\ldots, m, \, i=n_j+1,\ldots, \ell_j$.

\begin{cor} \label{C: coneeq}
	Let $Y_\bullet$ be an admissible flag on a Mori dream space $X$ such that
	 the conclusion of Proposition \ref{P:restriction}
	holds for  any SQM $f:X\dasharrow X'$ and any nef divisor $D'\subset X'$.
	Let furthermore  $D_1,\dots,D_r$ those  generators of a Mori chamber 
	$\Sigma$ which are movable. Then we have the identity
	\begin{align}
	&C(S(D_1,\dots,D_r)) \label{E: conepreim}\\ = 
	&q^{-1}(C(S_1(D_1,\dots,D_r))) \cap (\{0\} \times \R^{n-1}_{\geq 0} 
	\times C(D_1,\ldots, D_r)).  \nonumber
	\end{align}
\end{cor}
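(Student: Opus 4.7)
The plan is to prove the claimed set equality by establishing each inclusion, with the forward direction being essentially a definitional check and the reverse direction being where Proposition \ref{P:restriction} is invoked.

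For the inclusion $C(S(D_1,\ldots,D_r))\subseteq q^{-1}(C(S_1(D_1,\ldots,D_r)))\cap(\{0\}\times\R^{n-1}_{\geq 0}\times C(D_1,\ldots,D_r))$, I would verify it on generators. If $(\nu(s),[D])\in S(D_1,\ldots,D_r)$, then by definition $\nu_1(s)=0$, the vector $\nu(s)$ has nonnegative entries, and $[D]\in\Gamma(D_1,\ldots,D_r)\subseteq C(D_1,\ldots,D_r)$. The restriction $s|_{Y_1}$ is then a nonzero section of $\mathcal{O}_{Y_1}(D\cdot Y_1)$ whose Okounkov valuation with respect to the truncated flag $Y^1_\bullet$ is $\nu^1(s|_{Y_1})=(\nu_2(s),\ldots,\nu_n(s))$, so $q(\nu(s),[D])=(\nu^1(s|_{Y_1}),[D\cdot Y_1])\in S_1(D_1,\ldots,D_r)$. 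Taking closed convex hulls preserves the inclusion because $q$ is linear and the ambient constraint cone is closed and convex.

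For the reverse inclusion, I would use closedness to reduce to rational points $(v,[D])=((0,v_2,\ldots,v_n),[D])$ in the RHS with $[D]$ in the interior of $C(D_1,\ldots,D_r)$, and clear denominators so that $D\in\Gamma(D_1,\ldots,D_r)$. Since $D_1,\ldots,D_r$ are the movable generators of a Mori chamber, there is a SQM $f\colon X\dasharrow X'$ with $D=f^\ast D'$ for some nef divisor $D'$ on $X'$. The hypothesis of the corollary then lets me invoke Proposition \ref{P:restriction} to obtain
\begin{align*}
\Delta_{Y^1_\bullet}(\mbox{im}\, R)=\Delta_{Y^1_\bullet}(D\cdot Y_1).
\end{align*}
On the other hand, the multi-graded Okounkov body theorem (\cite[Theorem 4.26]{LM09}), applied on $Y_1$ to the graded linear series $\bigoplus_{E\in\Gamma}H^0(Y_1,\mathcal{O}_{Y_1}(E\cdot Y_1))$, identifies the fiber of $C(S_1(D_1,\ldots,D_r))$ over the interior class $[D\cdot Y_1]$ with $\Delta_{Y^1_\bullet}(D\cdot Y_1)$. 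Combining both identifications gives $(v_2,\ldots,v_n)\in\Delta_{Y^1_\bullet}(\mbox{im}\, R)$, so I can produce sections $\tilde s_k\in H^0(X,\mathcal{O}_X(kD))$ with $\nu_1(\tilde s_k)=0$ and $k^{-1}(\nu_2(\tilde s_k),\ldots,\nu_n(\tilde s_k))\to(v_2,\ldots,v_n)$. Each $(\nu(\tilde s_k),[kD])$ lies in $S(D_1,\ldots,D_r)$, so $k^{-1}(\nu(\tilde s_k),[kD])\in C(S(D_1,\ldots,D_r))$ converges to $(v,[D])$, which then lies in the cone by closedness.

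The main obstacle is the fiber identification for $C(S_1(D_1,\ldots,D_r))$ used above: since this cone is built from the restricted semigroup $\Gamma$ rather than the full effective semigroup of $Y_1$, I must verify that $[D\cdot Y_1]$ lies in the interior of the image cone $C(D_1\cdot Y_1,\ldots,D_r\cdot Y_1)$ before invoking the multi-graded Lazarsfeld-Mustata description, with boundary classes handled by the same approximation argument used to reduce to interior rational points in the first place.
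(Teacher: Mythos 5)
Your argument is correct and is essentially the paper's own (very terse) proof spelled out: the paper's proof consists only of invoking Proposition~\ref{P:restriction} together with the Hu--Keel fact \cite[Proposition~1.11(3)]{hk} that $C(D_1,\ldots,D_r)=f^*\Nef(X')$ for a fixed SQM $f\colon X\dasharrow X'$, which is precisely what your two-inclusion argument, combined with the Lazarsfeld--Musta\c{t}\u{a} fiber identification on $Y_1$, makes rigorous. The interior/boundary subtlety you flag is handled as you propose and is in fact simplified by observing that an interior class of $C(D_1,\ldots,D_r)=f^*\Nef(X')$ pulls back from an ample class on $X'$, so that $D\cdot Y_1$ is automatically big and the fiber description of $\Delta_{Y^1_\bullet}(Y_1)$ applies directly, with boundary points then absorbed by the closure-and-convexity step you already have in place.
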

\begin{proof}
	This follows from the condition together with  the fact that
	there exists a SQM $\pi: X\dasharrow X'$  such that each divisor in 
	the cone $C(D_1,\ldots, D_r)$ is a pullback by $\pi$  of a nef divisor
	on $X'$ (\cite[Proposition 1.11(3)]{hk}). 
\end{proof}

We can now prove the following theorem which will---together with 
identity (\ref{E: conepreim})---enable us to inductively infer information on the shape of
global Okounkov bodies of certain Mori dream spaces.

\begin{thm} \label{T: indok}
Suppose in the above situation that each of the cones  \linebreak
$C(S(D^j_1,\ldots, D^j_{n_j}))$ is rational
polyhedral with generators given by vectors $w^j_1,\ldots, w^j_{r_j}$. Then 
the global Okounkov body 
$\Delta_{Y_\bullet}(X)$  is the cone 
generated by the vectors 
\begin{align}
(\nu(s_{Y_1}, [Y_1])), (\nu(\sigma^j_i), [D^j_i]), w^j_h,
\label{E: genok}
\end{align}
for
$$
	j=1,\ldots, m, \quad i=n_j+1,\ldots, \ell_j, \quad 
h=1,\ldots, r_j.
$$
\end{thm}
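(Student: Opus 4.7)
The plan is to establish the two opposite inclusions between $\Delta_{Y_\bullet}(X)$ and the cone $\Lambda$ generated by the vectors in \eqref{E: genok}. The inclusion $\Lambda\subseteq\Delta_{Y_\bullet}(X)$ will be immediate: $(\nu(s_{Y_1}),[Y_1])$ and $(\nu(\sigma^j_i),[D^j_i])$ belong to the global valuation semigroup $\Gamma_{Y_\bullet}(X)$, while each $w^j_h$ lies in $C(S(D^j_1,\ldots,D^j_{n_j}))$, which is spanned by elements of $\Gamma_{Y_\bullet}(X)$ and hence is contained in $\Delta_{Y_\bullet}(X)$.

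For the reverse inclusion, $\Lambda$ is closed since it is finitely generated, and $\Delta_{Y_\bullet}(X)$ is the closed convex cone spanned by the integral valuation vectors $(\nu(s),[D])$ with $s\in H^0(X,\mathcal{O}_X(D))$; it therefore suffices to decompose each such vector as a non-negative combination of the generators of $\Lambda$. I would perform this in two successive peeling operations. First, writing $s=s_{Y_1}^{\nu_1(s)}\cdot\widetilde{s}$ yields $\widetilde{s}\in H^0(X,\mathcal{O}_X(D-\nu_1(s)Y_1))$ with $\nu_1(\widetilde{s})=0$ and contributes the summand $\nu_1(s)\cdot(\nu(s_{Y_1}),[Y_1])$. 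Next, since $[D-\nu_1(s)Y_1]$ is effective, it lies in some Mori chamber $\Sigma_j$, whose decomposition into movable and fixed parts gives $[D-\nu_1(s)Y_1]=[D_{\mathrm{mov}}]+\sum_i a_i[D^j_i]$ with $a_i\geq 0$, together with a corresponding factorisation $\widetilde{s}=\prod_i(\sigma^j_i)^{a_i}\cdot s'$ where $s'\in H^0(X,\mathcal{O}_X(D_{\mathrm{mov}}))$. Non-negativity of the valuations combined with $\nu_1(\widetilde{s})=0$ forces $\nu_1(s')=0$, so that $(\nu(s'),[D_{\mathrm{mov}}])\in C(S(D^j_1,\ldots,D^j_{n_j}))$, which by hypothesis is spanned by the $w^j_h$. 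Assembling the three contributions yields the desired expression for $(\nu(s),[D])$.

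The principal technical obstacle will be that the movable/fixed decomposition on a Mori dream space is guaranteed only to be rational, not integral: the coefficients $a_i$ and the class $[D_{\mathrm{mov}}]$ may be $\Q$-valued, and the factorisation $\widetilde{s}=\prod_i(\sigma^j_i)^{a_i}\cdot s'$ makes sense only after passing to a sufficiently divisible power of $\widetilde{s}$. Since the statement concerns closed convex cones, this rescaling of the valuation vector is benign and is absorbed into the final equality. A minor further subtlety is that $[D-\nu_1(s)Y_1]$ may lie on a shared face of several Mori chambers; any such chamber can be chosen, since the generators of all chambers appear simultaneously in the list \eqref{E: genok}.
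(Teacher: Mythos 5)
Your proposal is correct and takes essentially the same approach as the paper: peel off the $Y_1$-multiplicity, locate the residual effective class in a Mori chamber, split into fixed and movable parts with the corresponding factorisation of the section (passing to a power $M$ so everything becomes integral), and absorb the movable piece into $C(S(D^j_1,\ldots,D^j_{n_j}))$. The technical caveats you flag (rational coefficients requiring rescaling, divisors on chamber walls) are handled in the paper exactly as you suggest.
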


\begin{proof}

Let $E$ be an effective integral divisor on $X$, and let $s \in H^0(X, \mathcal{O}_X(E))$ be 
a nonzero section of $E$. Let $\nu_1(s)=a$. Then, $\zeta:=s/s_{Y_1}^a$, where $s_{Y_1} \in H^0(X, \mathcal{O}_X(Y_1))$ 
is the defining section of $Y_1$, is a section of $\mathcal{O}_X(E-aY_1)$ 
which vanishes to order $0$ along $Y_1$. Now, let $\Sigma=\mbox{conv}\{D_1,\ldots, D_\ell\}$
be a Mori chamber such that $E-aY_1 \in \Sigma$, and with generators ordered so that
 $D_1,\ldots, D_r$ are the movable generators. Let $E-aY_1=P+N$ be the corresponding decomposition of 
 $E-aY_1$ into its movable part $P$ and fixed part $N$.
Choose $M \in \N$ large enough such that all the divisors
$MP=c_1D_1+\cdots +c_{\ell} D_{r}$, $MN=c_{r+1}D_{r+1}+\cdots+c_\ell D_\ell$, where 
$c_1,\ldots, c_\ell \in \N_0$, and all $c_i D_i$ are integral divisors.
 Let $\sigma_i \in H^0(X, \mathcal{O}_X(D_i))$ be the 
defining section of $N_i, \, i=r+1,\ldots, \ell$.
The section $\zeta^M \in H^0(X, \mathcal{O}_X(m(E-aY_1)))$ now decomposes 
uniquely as a product
\begin{align*}
\zeta^M=\eta \sigma,
\end{align*}
where $\eta \in H^0(X, \mathcal{O}_X(c_1D_1+\cdots+c_r D_r))$, 
and $\sigma=\sigma_{r+1}^{c_{r+1}}\cdots\sigma_\ell^{c_\ell}$.
Since $\nu_1(\zeta)=0$, we also have $\nu_1(\eta)=0$.
Now, by assumption we have integral generators $w_1,\ldots, 
w_k \in \R^{n} \times \overline{\mbox{Eff}}(X)$
for the cone $C(S(D_1,\ldots, D_r))$, so that 
$(\nu(\eta), MP)=s_1w_1+\cdots+s_kw_k$, for some $s_1,\ldots, s_k \geq 0$.
Hence,   
\begin{align*}
(\nu(s), E)&=a(\nu(s_{Y_1}), Y_1)+\frac{c_{r+1}}{M}(v(\sigma_{r+1}), [D_{r+1}])+\cdots 
+\frac{c_\ell}{M}(v(\sigma_\ell), [D_\ell])\\
&+\frac{s_1}{M}w_1+\cdots+\frac{s_k}{M}w_k.
\end{align*}
It follows that $\Delta_{Y_\bullet}(X)$ lies in the closed convex cone generated by the 
vectors \eqref{E: genok}. Since all these vectors clearly belong to $\Delta_{Y_\bullet}(X)$, 
this finishes the proof.
\end{proof}

We are now in the position to prove the main result of this paper. 
Let us first define what we mean by a good flag on a Mori dream space.
\begin{defin}
	Let $X$ be a Mori dream space. An admissible flag $X= Y_0\supseteq Y_1\supseteq\dots\supseteq Y_n$ is \emph{good}, if
    \begin{enumerate}
        \item 
            $Y_i$ is a Mori dream space for each $0\le i\le n$, and for $i=1,\ldots, n$, $Y_i$ is 
	cut out by a global section $s_i\in H^0(Y_{i-1},\O_{Y_{i-1}}(Y_i))$, and
        \item
            for any small $\Q$-factorial modification 
            $f:Y_i\dasharrow Y_i'$, the exceptional locus $\exc(f)$
            intersects $Y_{i+1}$ properly.
    \end{enumerate}

\end{defin} 

Our main theorem now follows from the above results.
\begin{thm}\label{T: okgood}
	Assume that the  Mori dream space $X$ admits a good flag $Y_\bullet$. Then
	$
		\Delta_{Y_\bullet}(X)
	$
	is rational polyhedral.
\end{thm}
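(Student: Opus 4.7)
The plan is to induct on $n=\dim X$, using the technical apparatus developed earlier in this section. The base case $n=0$ is immediate, as then $\Delta_{Y_\bullet}(X)=\Eff(X)$, which is rational polyhedral for a Mori dream space by the Hu--Keel theory.

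For the inductive step, first observe that the truncated flag $Y^1_\bullet: Y_1\supseteq Y_2\supseteq\cdots\supseteq Y_n$ is itself a good flag on the Mori dream space $Y_1$: condition (1) is directly inherited, and condition (2) for SQMs of $Y_i$ with $i\geq 1$ is the restriction to such $i$ of condition (2) for $Y_\bullet$. By the inductive hypothesis, $\Delta_{Y^1_\bullet}(Y_1)$ is rational polyhedral. Second, condition (2) of $Y_\bullet$ at $i=0$ asserts that for any SQM $f:X\dasharrow X'$ the exceptional locus $\exc(f)$ meets $Y_1$ properly, which by the lemma preceding Proposition \ref{P:restriction} guarantees that the induced birational map $f_{Y_1}:Y_1\dasharrow Y_1'$ restricts to an isomorphism between open subsets whose complements have codimension $\geq 2$. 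Hence Proposition \ref{P:restriction} applies to every SQM of $X$ paired with a nef divisor on the target, and Corollary \ref{C: coneeq} yields
\begin{align*}
C(S(D_1,\ldots,D_r))=q^{-1}(C(S_1(D_1,\ldots,D_r)))\cap\bigl(\{0\}\times\R^{n-1}_{\geq 0}\times C(D_1,\ldots,D_r)\bigr)
\end{align*}
for the movable generators $D_1,\ldots,D_r$ of any Mori chamber of $X$.

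It now suffices to check that each cone $C(S_1(D_1,\ldots,D_r))$ is rational polyhedral: under this hypothesis, the displayed identity presents $C(S(D_1,\ldots,D_r))$ as the intersection of the preimage of a rational polyhedral cone under a linear map with a rational polyhedral cone, hence itself rational polyhedral. By construction, $C(S_1(D_1,\ldots,D_r))$ is the closed convex cone spanned by the sub-semigroup of the global Okounkov semigroup $\Gamma_{Y^1_\bullet}(Y_1)$ whose second coordinate lies in the finitely generated semigroup $\Gamma(D_1\cdot Y_1,\ldots,D_r\cdot Y_1)$; since Okounkov bodies depend only on the numerical class of the divisor, this cone coincides with the intersection of $\Delta_{Y^1_\bullet}(Y_1)$ with the preimage of the rational polyhedral cone $C(D_1\cdot Y_1,\ldots,D_r\cdot Y_1)$ under the projection to $N^1(Y_1)_\R$. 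By the inductive hypothesis $\Delta_{Y^1_\bullet}(Y_1)$ is rational polyhedral, so the intersection is rational polyhedral. Since $X$ has only finitely many Mori chambers, Theorem \ref{T: indok} then exhibits $\Delta_{Y_\bullet}(X)$ as the cone spanned by a finite list of rational vectors, completing the induction.

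The real work in this proof has already been done in Proposition \ref{P:restriction}, whose argument genuinely uses both condition (2) of the good flag and the GIT description of SQMs on Mori dream spaces; the main remaining delicate point in the plan above is the identification of $C(S_1(D_1,\ldots,D_r))$ with a fiber-wise intersection of $\Delta_{Y^1_\bullet}(Y_1)$, which is however essentially formal given the numerical invariance of Okounkov bodies.
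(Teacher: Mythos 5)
Your proposal is correct and follows essentially the same route as the paper: induction on dimension, with the inductive step running through Theorem~\ref{T: indok}, Corollary~\ref{C: coneeq}, and the identification of $C(S_1(D_1,\ldots,D_r))$ as a slice of the preimage of $\Delta_{Y^1_\bullet}(Y_1)$ under the projection to $N^1(Y_1)_\R$. The only cosmetic difference is the base case: you start the induction at $n=0$ (a point), while the paper starts at $n=1$ (Mori dream curve $\cong\mathbb{P}^1$), which is a cleaner anchor since a zero-dimensional variety is a degenerate case of the Okounkov construction; you also spell out a little more explicitly than the paper that the truncated flag $Y^1_\bullet$ inherits the good flag conditions, which is a worthwhile remark.
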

\begin{proof}
	We prove the theorem by induction over $n$. Every Mori dream curve
	is isomorphic to $\mP^1$, which for any choice of flag (i.e., choice of a point) has rational 
	polyhedral global Okounkov body, namely the cone in $\R^2$ spanned by the points
	$(0,1)$ and $(1,1)$.
	
	For the inductive step assume that $\Delta_{Y^1_\bullet}(Y_{1})$ is rational 
	polyhedral. By Theorem \ref{T: indok}, what we need to prove is that for any Mori chamber
	$\Sigma$ in $\Eff(X)$ the set of movable generators $D_1,\dots,D_r$ of $\Sigma$ 
	yield a rational polyhedral cone $C(S(D_1,\dots,D_r))$. 
	
	Since $Y_\bullet$ is a good flag, in particular 
	the assumptions of Proposition \ref{P:restriction} are satisfied for $Y_1\subset X$,
	so we can apply Corollary \ref{C: coneeq}. Since the linear map $q$ 
	(cf. \eqref{E: linmapq}) is 
	defined over $\Z$, equality (\ref{E: conepreim}) implies that $C(S(D_1,\dots,D_r))$ 
	is rational polyhedral if $C(S_1(D_1,\dots,D_r))$ is. Now the rational polyhedrality of 
	$C(S_1(D_1,\ldots, D_r))$ follows 
	from the rational polyhedrality of $\Delta_{Y^1_\bullet}(Y_{1})$ since
	$$
		C(S_1(D_1,\dots,D_r)) = 
		pr_2^{-1}(\Gamma(D_1 \cdot Y_1,\dots,D_r \cdot Y_1) \cap \Delta_{Y^1_\bullet}(Y_{1}),
	$$
	where $\Gamma(D_1 \cdot Y_1,\cdots, D_r \cdot Y_1) \subseteq \overline{\mbox{Eff}}(Y_1)$ 
	is the convex cone generated by the numerical equivalence classes of the divisors 
	$D_1 \cdot Y_1,\ldots, D_r \cdot Y_1$ on $Y_1$.
	
\end{proof}

\begin{rem}
It should be noted that the above result does not hold for general 
admissible flags of subvarieties of a Mori dream space. Indeed, \cite[Example 3.4]{KLM12} shows that 
$X:=\mathbb{P}^2 \times \mathbb{P}^2$ can be equipped with an admissible flag $Y_\bullet$ such that
the Okounkov body $\Delta_{Y_\bullet}(D)$, where $D$ is a divisor in the linear series 
$\mathcal{O}_{\mathbb{P}^2 \times \mathbb{P}^2}(3,1)$,
is not polyhedral. The flag $Y_\bullet$ here is of course not  a 
flag of Mori dream spaces: if $Y_1:=\mathbb{P}^2 \times E$, where $E \subseteq \mathbb{P}^2$ is a general elliptic curve, 
were a Mori dream space, then its image $E$ under the second projection would also be a Mori dream space 
by \cite[Theorem 1.1]{Oka11}. However, $\mathbb{P}^1$ is the only Mori dream curve (cf. \cite[p. 6]{C}).

Finally, \cite[Prop. 3.5]{KLM12} gives another example of a non-polyhedral Okounkov 
body of a divisor an a Mori dream space $Z$ with respect to a family of admissible flags $Y_\bullet$. 
However, the description of the pseudo-effective cone $\overline{\mbox{Eff}}(Y_1)$ shows that this 
cone is defined by quadratic equations and is thus not polyhedral; hence the divisor $Y_1$ on 
$Z$ is not a Mori dream space.
\end{rem}


\section{Okounkov bodies on Bott-Samelson varieties}\label{s: appl}

In this section we apply the general results from the previous section 
to Bott-Samelson varieties.

\subsection{Global Okounkov bodies}

On a Bott-Samelson variety $Z_w$ defined by a reduced sequence $w$ let 
$Y_\bullet$ be the natural  vertical flag described in section \ref{ss:VerticalFlag}.
In order to apply Theorem \ref{T: okgood} we will show that $Y_\bullet$ is 
in fact a good flag. The fact that each $Y_i$ is a Mori dream space follows from
Theorem \ref{T: BSMDS} and the second condition is a consequence of the 
following proposition together with the fact that the exceptional locus of a SQM equals
the stable base locus of the defining movable divisor.

\begin{prop}\label{P:BSrestr}
	Let $D$ be a Cartier divisor on $Z_\omega$, and $F$ a base component of 
	the complete linear series $|D|$. Then $F$ intersects $Y:=Y_1$ properly, i.e.,
	$$
		\rm{codim}_Y(F|_Y) = \rm{codim}_X(F)
	$$
		
\end{prop}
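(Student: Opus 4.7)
My plan is to exploit the natural $P_1$-action on $Z_w$ and its lift to every line bundle. Recall from Section \ref{s: BS} that every line bundle $L=\O(m_1,\ldots,m_n)$ on $Z_w$ carries a $P_1$-action by bundle automorphisms, coming from left multiplication on the first factor of $P_1\times\cdots\times P_n$. Via the formula \eqref{E: repsections} this induces a linear representation of $P_1$ on $H^0(Z_w,L)$, and hence a $P_1$-action on the whole linear series $|D|$. Consequently the base scheme $\mbox{Bs}(|D|)$ is $P_1$-invariant, and by connectedness of $P_1$ each of its irreducible components is preserved as a set. In particular the base component $F$ is $P_1$-stable.

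Next I would invoke the fibration $\pi\colon Z_w\to P_1/B\cong\mP^1$, which is $P_1$-equivariant, and recall that $Y=\pi^{-1}(p_0)$ for the unique $B$-fixed point $p_0\in P_1/B$. The image $\pi(F)\subseteq\mP^1$ is closed, irreducible, nonempty, and $P_1$-invariant. Since $P_1$ acts transitively on $\mP^1$ (with no fixed points), the only such subset is $\mP^1$ itself, so $\pi|_F\colon F\to\mP^1$ is surjective. In particular $F$ cannot contain $Y$, for otherwise $P_1$-invariance of $F$ would force $F\supseteq P_1\cdot Y=\pi^{-1}(\mP^1)=Z_w$, contradicting that $F$ is a proper subvariety.

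To finish, I would use $P_1$-equivariance to make the fibers of $\pi|_F$ equidimensional: for any $g\in P_1$ and $y\in\mP^1$,
\[
(\pi|_F)^{-1}(g\cdot y)=g\cdot(\pi|_F)^{-1}(y),
\]
so since $P_1$ acts transitively on $\mP^1$ all fibers of $\pi|_F$ are mutually isomorphic. Their common dimension must then agree with the generic fiber dimension $\dim F-1$. Therefore $F\cap Y=(\pi|_F)^{-1}(p_0)$ has dimension $\dim F-1$, and
\[
\codim_Y(F\cap Y)=\dim Y-(\dim F-1)=\dim Z_w-\dim F=\codim_{Z_w}(F),
\]
which is the desired proper intersection (and the same computation handles higher-codimension base components).

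The only real obstacle is the initial step: one must be sure that the $P_1$-action on $Z_w$ genuinely lifts to every $\O(m_1,\ldots,m_n)$ in such a way that $H^0(Z_w,L)$ becomes a $P_1$-module, so that $|D|$ (and hence its base scheme) is $P_1$-invariant. This is however established in Section \ref{s: BS} through the explicit bundle description and the formula \eqref{E: repsections}. Once this is in hand, the rest of the argument is essentially a clean consequence of $P_1$-equivariance together with the transitivity of $P_1$ on $\mP^1$.
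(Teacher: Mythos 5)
Your proposal is correct and follows essentially the same route as the paper: establish that the base locus (hence each base component $F$) is $P_1$-invariant via the lifted action and \eqref{E: repsections}, then use $P_1$-equivariance of $\pi$ together with transitivity of $P_1$ on $\mP^1$ to conclude that all fibers of $\pi|_F$ have the generic dimension $\dim F - 1$, yielding the codimension equality. Your version spells out a couple of intermediate steps (surjectivity of $\pi|_F$, the argument that $F \not\supseteq Y$) that the paper leaves implicit, but the underlying ideas are identical.
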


\begin{proof}
	We first prove that the base locus of $|D|$ is invariant under the group $P_1$.
	Let therefore $s\in H^0(X,\mathcal O_X(D))$ be a section,  and let	$p\in P_1$. 
	Then for any $x\in Z_\omega$
	$$
		s(px)= pp^{-1} s(px) = p( p^{-1}s(px)).
	$$
	By (\ref{E: repsections}), the right hand side 	
	is exactly $p((p^{-1}s)(x))$ and $(p^{-1}s\in H^0X,\mathcal O_X(D))$
	vanishes in $x$ if $x$ is in the base locus $B(D)$. Therefore,
	$$
	s(px) = p 0_{L_x} = 0_{L_{px}},
	$$
	and the claim follows. Then, every irreducible component of $B(D)$ is 
also $P_1$-invariant. In particular, $F$ is invariant under $P_1$. \bigskip

\unitlength 1mm 
\linethickness{0.4pt}
\ifx\plotpoint\undefined\newsavebox{\plotpoint}\fi 
\begin{picture}(79.57,60.545)(0,0)
\put(4.835,11.247){\line(1,0){71}}
\put(40.573,11.247){\circle*{1.33}}
\qbezier(75.7,60.3)(69.479,43)(75.7,21.5)
\qbezier(59,60.3)(52.188,43)(59,21.5)
\qbezier(31.5,60.3)(24.649,43)(31.5,21.5)
\qbezier(17,60.3)(10.196,43)(17,21.5)
\qbezier(10,60.3)(2.943,43)(10,21.5)
\put(75.891,60.3){\line(-1,0){71.1609}}
\qbezier(4.73,60.3)(-2.312,43)(4.73,21.5)
\put(4.73,21.5){\line(1,0){71}}
\thicklines
\qbezier(73.053,49.508)(48.352,52.451)(37.525,47.406)
\qbezier(37.525,47.406)(23.493,41.204)(1.261,45.093)
\qbezier(42.15,60.3)(33.11,43)(40.468,21.5)
\put(80,11.457){\makebox(0,0)[cc]{$\mP^1$}}

\put(41,8){\makebox(0,0)[cc]{$p_0$}}

\thinlines
\put(40.573,20.917){\vector(0,-1){8.514}}
\put(42.255,17.133){\makebox(0,0)[cc]{$\pi$}}
\put(80,27){\makebox(0,0)[cc]{$Z_\omega$}}
{\footnotesize
\put(41,29.852){\makebox(0,0)[cc]{$Y$}}
\put(47,48){\makebox(0,0)[cc]{$\ F$}}
}
\end{picture}
	
	Now, the $P_1$-action on $\mP^1$ has only one orbit, so $P_1$ 
	operates transitively on the fibres of $\pi$. Hence, the base component 
	$F$ is given by the union of orbits of elements in the restriction $F|_Y$. 
	Therefore, the generic fibre dimension holds for all fibres of $\pi$, so that
	$$
		\dim F = \dim F|_Y + 1, 
	$$
	which implies the statement.
	
\end{proof}

\begin{thm} \label{T: okbs}
Let $Z_w$ be a Bott-Samelson variety defined by a reduced sequence $w$. Then the 
global Okounkov body $\Delta_{Y_\bullet}$ is rational polyhedral.
\end{thm}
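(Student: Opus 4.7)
The plan is to apply Theorem \ref{T: okgood} to the Bott-Samelson variety $Z_w$ equipped with the vertical flag $Y_\bullet$ introduced in subsection \ref{ss:VerticalFlag}. This reduces the statement to showing that $Z_w$ is a Mori dream space and that $Y_\bullet$ is a good flag in the sense of the definition preceding Theorem \ref{T: okgood}.

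First, by Theorem \ref{T: BSMDS}, $Z_w$ is a Mori dream space. The key observation is that each member $Y_i$ of the vertical flag is itself naturally a Bott-Samelson variety: since $Y_i$ is obtained iteratively as the fiber over the $B$-fixed point $p_0$ of the $\mathbb{P}^1$-bundle $Y_{i-1}\to P_1/B$, the variety $Y_i$ is isomorphic to the Bott-Samelson variety $Z_{(s_{i+1},\ldots,s_n)}$. Since $(s_{i+1},\ldots,s_n)$ is a subsequence of the reduced sequence $w$, it is itself reduced, so Theorem \ref{T: BSMDS} applies to each $Y_i$, giving the Mori dream space part of condition (1) in the definition of a good flag. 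The rest of condition (1) is immediate: $Y_i$ is cut out inside $Y_{i-1}$ as the zero locus of the global section of $\mathcal{O}_{Y_{i-1}}(Y_i)$ corresponding (via pullback along the $\mathbb{P}^1$-bundle $Y_{i-1}\to P_1/B$) to the $B$-fixed point $p_0$ on $P_1/B\cong\mathbb{P}^1$.

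For condition (2), let $f:Y_i\dashrightarrow Y_i'$ be an arbitrary SQM. As recalled in the paragraph preceding Proposition \ref{P:BSrestr}, the exceptional locus $\exc(f)$ coincides with the stable base locus of a movable divisor defining $f$, and hence is contained in a union of base components of complete linear series $|D|$ on $Y_i$. Since the vertical flag restricts compatibly---its trace on $Y_i$ is exactly the vertical flag of the smaller Bott-Samelson variety $Y_i\cong Z_{(s_{i+1},\ldots,s_n)}$, whose first member is $Y_{i+1}$---we may apply Proposition \ref{P:BSrestr} directly to $Y_i$ to conclude that every such base component meets $Y_{i+1}$ properly. Hence $\exc(f)$ intersects $Y_{i+1}$ properly, verifying condition (2).

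Both conditions of being a good flag are thereby verified, and Theorem \ref{T: okgood} immediately yields that $\Delta_{Y_\bullet}(Z_w)$ is rational polyhedral. The only conceptual step beyond invoking the earlier results is the recognition that the vertical flag is self-similar: each $Y_i$ is itself a Bott-Samelson variety whose own vertical flag is precisely the tail $Y_i\supseteq Y_{i+1}\supseteq\cdots\supseteq Y_n$. This compatibility is exactly what allows the Bott-Samelson-specific geometric input of Proposition \ref{P:BSrestr} to be applied uniformly along the flag, and I expect no further difficulty.
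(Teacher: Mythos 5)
Your proof is correct and follows essentially the same route as the paper: invoke Theorem \ref{T: BSMDS} for the Mori dream space property of $Z_w$ and each $Y_i$, note that each $Y_i$ is cut out by the pullback of a section of $\O_{\mP^1}(1)$ vanishing at the $B$-fixed point, and use the equality of $\exc(f)$ with the stable base locus of the defining movable divisor together with an inductive application of Proposition \ref{P:BSrestr} to verify condition (2) of goodness, after which Theorem \ref{T: okgood} concludes. Your explicit observation that each $Y_i\cong Z_{(s_{i+1},\ldots,s_n)}$ (a reduced suffix) and that its vertical flag is the tail of $Y_\bullet$ is exactly the self-similarity the paper uses implicitly when it writes ``inductive application''; one small wording caveat is that $\exc(f)$ should be said to \emph{equal} a finite union of irreducible components of $B(|mD|)$ for suitable $m$ (rather than merely being ``contained in'' such a union), since proper intersection is not inherited by arbitrary closed subsets, but this is clearly what you intend and the argument goes through.
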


\begin{proof}
By Theorem \ref{T: BSMDS}, the variety $Z_w$ is a Mori dream space. The same 
is true for each $Y_i$. Moreover, by construction, $Y_i$ defines a Cartier divisor on $Y_{i-1}$ 
and is cut out by a global section $s_i$ which is just the pullback of a 
section $t_i\in H^0(\mP^1,\O_{\mP^1}(1))$ vanishing in the $B$-fixed point $p_0$. 
Inductive application of Proposition \ref{P:BSrestr} then shows that $Y_\bullet$ is 
a good flag and the result follows from Theorem \ref{T: okgood}.
\end{proof}

We can now also show that the Okounkov bodies of effective line 
bundles over Schubert varieties, with respect to a natural valuation-like 
function, are rational polyhedral. 
Indeed, Schubert varieties have rational singularities, so that 
the projection morphism $p_w: Z_w \to Y_{\overline{w}}$ 
satisfies the property $(p_w)_*\mathcal{O}_{Z_w}=\mathcal{O}_{Y_{\overline{w}}}$ 
(cf. \cite[Section 2.2]{B}). Hence, for any effective line bundle 
$L$ on $Y_{\overline{w}}$, we have 
\begin{align}
H^0(Y_{\overline{w}}, L) \cong H^0(Z_w, p_w^*L).
\label{E: pbschubert}
\end{align}
Let now $$\nu: \mbox{Cox}(Z_w)_h \setminus \{0\} \to \N_0^n,$$ 
where $\mbox{Cox}(Z_w)_h$ denotes the set of homogeneous elements 
in the Cox ring of $Z_w$ with respect to the effective basis, 
be the valuation-like function 
defined by the flag $Y_\bullet$, 
and let 
\begin{align*}
\nu_L: \bigsqcup_{k \geq 0} H^0(Y_{\overline{w}}, L^k) \setminus \{0\} 
\to \N_0^n
\end{align*}
be the valuation-like function naturally defined by the isomorphisms
\eqref{E: pbschubert} (for all powers $L^k$) and restriction of $\nu$. 
Then, the Okounkov body 
$\Delta_{\nu_L}(L)$ coincides with the slice 
$p_2^{-1}(p_w^*L) \cap \Delta_{Y_\bullet}(X)$ of $\Delta_{Y_\bullet}(X)$, 
and hence is rational polyhedral. Thus, we have proved the following corollary.
\begin{cor}
Let $L$ be an effective line bundle over the Schubert variety 
$Y_{\overline{w}}$ of $G/B$. Then, the Okounkov body 
$\Delta_{Y_\bullet}(L)$ defined by the natural valuation-like function 
$v_L$ defined by the flag $Y_\bullet$ in $Z_w$ is a rational polytope.
\end{cor}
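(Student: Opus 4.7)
The plan is to realize the Okounkov body $\Delta_{Y_\bullet}(L)$ as a slice of the already-computed global Okounkov body of the Bott-Samelson resolution $Z_w$. The key input is the birational desingularization $p_w: Z_w \to Y_{\overline{w}}$, together with the fact that Schubert varieties have rational singularities, which gives $(p_w)_*\mathcal{O}_{Z_w} = \mathcal{O}_{Y_{\overline{w}}}$. By the projection formula, this extends to the isomorphism $H^0(Y_{\overline{w}}, L^k) \cong H^0(Z_w, p_w^*L^k)$ for every $k \geq 0$, which is natural in $k$ and hence induces an isomorphism of section rings $R(Y_{\overline{w}}, L) \cong R(Z_w, p_w^*L)$.

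Next, I would transport the valuation-like function $\nu$ on $\mathrm{Cox}(Z_w)$ (with respect to the vertical flag $Y_\bullet$) to sections of $L$ via this isomorphism: for $0\neq s \in H^0(Y_{\overline{w}}, L^k)$, set $\nu_L(s) := \nu(p_w^*s)$. Since the ring isomorphism is graded and multiplicative, $\nu_L$ inherits the properties of a valuation-like function on $R(Y_{\overline{w}}, L)$. Consequently, the valuation semigroup $\Gamma_{\nu_L}(L)$ coincides with the valuation semigroup $\Gamma_{Y_\bullet}(p_w^*L)$ computed inside $Z_w$, and therefore $\Delta_{\nu_L}(L) = \Delta_{Y_\bullet}(p_w^*L)$.

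Finally, I invoke Theorem \ref{T: okbs}, which states that the global Okounkov body $\Delta_{Y_\bullet}(Z_w)$ is a rational polyhedral cone in $\R^n \times N^1(Z_w)_\R$. The Okounkov body $\Delta_{Y_\bullet}(p_w^*L)$ is by definition the fiber of the second projection over the class $[p_w^*L]$, which is a rational point of $N^1(Z_w)_\R$. Intersecting a rational polyhedral cone with the rational affine subspace $p_2^{-1}([p_w^*L])$ yields a rational polytope, and composing with the identification above completes the argument.

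The only subtlety—really the only place where something could go wrong—is the compatibility step: one must check that pulling sections back along the birational morphism $p_w$ (which is an isomorphism over an open subset containing the generic points of the vertical flag, since the flag lies in a neighborhood of a point of transversal intersection inside $Z_w$) genuinely computes the same vanishing orders one would naturally associate to $L$ on $Y_{\overline{w}}$. This is immediate once one observes that the flag $Y_\bullet$ in $Z_w$ is fully contained in the locus where $p_w$ is an isomorphism onto its image, so there is no loss of information when passing between $L$ and $p_w^*L$.
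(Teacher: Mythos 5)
Your main argument is correct and is essentially the paper's own proof: use the rational singularities of Schubert varieties to get the graded isomorphism $H^0(Y_{\overline{w}}, L^k) \cong H^0(Z_w, p_w^*L^k)$, transport the valuation-like function $\nu$ along it to define $\nu_L$, observe that $\Delta_{\nu_L}(L) = \Delta_{Y_\bullet}(p_w^*L)$ is the fiber of the global Okounkov body $\Delta_{Y_\bullet}(Z_w)$ over the rational class $[p_w^*L]$, and invoke Theorem \ref{T: okbs}.

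However, the final paragraph of your proposal introduces a claim that is both unnecessary and, in general, false. You assert that the vertical flag $Y_\bullet$ is ``fully contained in the locus where $p_w$ is an isomorphism onto its image.'' This is not the case: $Y_1$ is the fiber of $\pi: Z_w \to P_1/B$ over the $B$-fixed point $eB$, and its image under $p_w$ lies in the boundary $Y_{\overline{w}} \setminus B\overline{w}B/B$ of the Schubert variety, precisely where $p_w$ fails to be an isomorphism in general. Fortunately this has no bearing on the argument: the corollary does not assert that $\nu_L$ computes intrinsic vanishing orders on $Y_{\overline{w}}$; by construction $\nu_L$ is \emph{defined} by pulling sections back to $Z_w$ and applying the flag valuation there, and nothing more is needed. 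You should simply delete the compatibility discussion rather than try to defend the isomorphism-locus claim.
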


If $Y_{\overline{w}}=G/B$ is a flag variety, the Picard group 
$\mbox{Pic}(G/B)$ has an effective basis, namely the line bundles 
$L_i=G \times_{\omega_i} \C$ defined by the fundamental weights 
$\omega_i, \, i=1,\ldots, r,$ with respect to a choice of simple roots 
for the root system of $G$. 
Let $\Sigma \subseteq \overline{\mbox{Eff}}(Z_w)$ be the closed 
convex cone generated by the divisors of the line bundles 
$p_w^*L_i, \, i=1,\ldots, r$. 
By the isomorphisms \eqref{E: pbschubert} we now have 
\begin{align*}
\Delta_{Y_\bullet}(G/B) \cong p_2^{-1}(\Sigma) \cap \Delta_{Y_\bullet}(Z_w).
\end{align*}
Since the cone $\Sigma$ is finitely generated, the cone on the right hand side is 
rational polyhedral, so that we have proved the following corollary.

\begin{cor}
The global Okounkov body $\Delta_{Y_\bullet}(G/B)$ of the flag variety $G/B$, 
with respect to the valuation defined by the flag $Y_\bullet$ of subvarieties of 
$Z_w$, is a rational polyhedral cone.
\end{cor}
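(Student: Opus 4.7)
The plan is to deduce the corollary from Theorem \ref{T: okbs} by identifying $\Delta_{Y_\bullet}(G/B)$ with a linear slice of $\Delta_{Y_\bullet}(Z_w)$. First I would take $w$ to be a reduced expression for the longest element of the Weyl group, so that the associated Bott-Samelson morphism $p_w: Z_w \to G/B$ is a birational morphism between smooth projective varieties of the same dimension.

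Next I would use that $G/B$ is smooth, hence has rational singularities, giving $(p_w)_*\mathcal{O}_{Z_w}=\mathcal{O}_{G/B}$ and canonical isomorphisms $H^0(G/B, L) \cong H^0(Z_w, p_w^* L)$ for every effective line bundle $L$ on $G/B$. These isomorphisms are multiplicative in $L$ and transport the $Y_\bullet$-valuation on the section ring of $p_w^* L$ to a valuation on $R(G/B, L)$. Fiberwise, this identifies the Okounkov body $\Delta_{Y_\bullet}(L)$ with the fiber of $\Delta_{Y_\bullet}(Z_w)$ over the class $[p_w^* L] \in N^1(Z_w)_\R$.

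To assemble these fibers into the global Okounkov body, I would choose the effective basis of $\mbox{Pic}(G/B)$ given by the fundamental line bundles $L_{\omega_1},\ldots, L_{\omega_r}$, and let $\Sigma \subseteq \Eff(Z_w)$ be the rational polyhedral cone generated by the pullback classes $[p_w^* L_{\omega_i}]$. The compatibility of pullback with valuations then yields a natural identification
\begin{align*}
\Delta_{Y_\bullet}(G/B) \cong p_2^{-1}(\Sigma) \cap \Delta_{Y_\bullet}(Z_w),
\end{align*}
where $p_2$ denotes projection onto the N\'eron-Severi factor. Since $\Sigma$ is rational polyhedral (finitely many integral generators), $\Delta_{Y_\bullet}(Z_w)$ is rational polyhedral by Theorem \ref{T: okbs}, and $p_2$ is a $\Q$-linear map, the intersection on the right is rational polyhedral.

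The main obstacle I foresee is making the global identification of $\Delta_{Y_\bullet}(G/B)$ as a slice of $\Delta_{Y_\bullet}(Z_w)$ precise, since one must check that the fiberwise description extends coherently as $[L]$ varies over all of $\Eff(G/B)$ and that pullback induces an injection $N^1(G/B)_\R \hookrightarrow N^1(Z_w)_\R$ whose image is exactly the linear span of $\Sigma$. These points are routine for a birational morphism of smooth projective varieties, but they are where the bookkeeping happens.
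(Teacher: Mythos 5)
Your proposal is correct and follows essentially the same approach as the paper: pass to the Bott-Samelson resolution $p_w: Z_w \to G/B$ for $w$ reduced of maximal length, use $(p_w)_*\mathcal{O}_{Z_w}=\mathcal{O}_{G/B}$ to get multiplicative isomorphisms $H^0(G/B,L) \cong H^0(Z_w, p_w^*L)$, let $\Sigma$ be the cone generated by the pullbacks of the fundamental line bundles, and identify $\Delta_{Y_\bullet}(G/B)$ with the slice $p_2^{-1}(\Sigma) \cap \Delta_{Y_\bullet}(Z_w)$, which is rational polyhedral by Theorem \ref{T: okbs}.
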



\subsection{Weight multiplicities}
We now turn our attention to the action of 
a torus $H \subseteq B$, contained in a maximal torus of $G$ lying in $B, $ on the section ring 
$R(D):=\bigoplus_{k \geq 0} H^0(Z_w, \mathcal{O}_{Z_w}(kD))$ of an effective 
divisor $D$ on $Z_w$. Recall that each section 
space $H^0(Z_w, \mathcal{O}_{Z_w}(kD))$ carries a representation 
of $B$ given by the action of $B$ as automorphisms of the 
line bundle $\mathcal{O}_{Z_w}(kD)$ (cf. \cite{LT04}). 
Moreover, the flag $Y_\bullet$ consists of 
$B$-invariant subvarieties of $Z_w$, so that 
the valuation-like function 
\begin{align*}
 \nu_{D}: \bigsqcup_{k \geq 0} H^0(Z_w, \mathcal{O}_{Z_w}(kD)) \setminus \{0\} \to \N_0^n
\end{align*}
is $B$-invariant, i.e., the identity $\nu(b.s)=\nu(s)$ holds for any non-zero 
section $s \in H^0(Z_w, \mathcal{O}_{Z_w}(kD))$, and $b \in B$. 
Hence, there is a well-defined projection 
\begin{align*}
q: \Delta_{Y_\bullet}(D) \to \Pi_D
\end{align*}
onto the weight polytope (cf. \cite{Br86}) of the section ring $R(D)$ for the action of 
the torus $H$ (cf. \cite{Ok96} \cite{KK10}). If $\mathfrak{h}=\mbox{Lie}(H)$ is the Lie algebra of $H$, 
and the $\mu \in \Pi_D \subseteq \mathfrak{h}$ is a rational point 
in the interior of the weight polytope, we then 
have that the asymptotics of the weight spaces 
$W_{k\mu} \subseteq H^0(Z_w, \mathcal{O}_{Z_w}(kD))$ are 
given by
\begin{align*}
\lim_{k \rightarrow \infty} \frac{\mbox{dim} W_{k\mu}}{k^{d-r}}=\mbox{vol}_{d-r}(q^{-1}(\mu) 
\cap \Delta_{Y_\bullet}(D)),
\end{align*}
where $r$ is the dimension of the moment polytope $\Pi_D$, $d$ is the dimension 
of the Okounkov body $\Delta_{Y_\bullet}(D)$ (and which equals the 
Iitaka dimension of the line bundle $\mathcal{O}_{Z_w}(D)$), and the 
right hand side denotes the $(d-r)$-dimensional Lebesgue measure of the slice 
$q^{-1}(\lambda) \cap \Delta_{Y_\bullet}(D)$ of the 
Okounkov body $\Delta_{Y_\bullet}(D)$. We thus get the following result, saying the 
the asymptotics of weight spaces are given by polyhedral expressions.

\begin{cor}
For any effective divisor $D$ on $Z_w$, and rational point $\mu \in \Pi_D$ in the 
interior of $\Pi_D$, the asymptotic 
multiplicity $$\lim_{k \rightarrow \infty} \frac{\mbox{dim} W_{k\mu}}{k^{d-r}}$$ is 
the volume of a rational polytope. As a consequence, the same 
holds for the weight spaces $$W_{k\mu} \subseteq H^0(Y_{\overline{w}}, L^k)$$ for 
an effective line bundle $L$ over a Schubert variety $Y_{\overline{w}}$.
\end{cor}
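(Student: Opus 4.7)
The plan is to combine the rational polyhedrality of the global Okounkov body with the asymptotic weight-multiplicity formula recalled just above the statement. By Theorem \ref{T: okbs}, $\Delta_{Y_\bullet}(Z_w)$ is rational polyhedral, so its fiber $\Delta_{Y_\bullet}(D)$ over $[D]$ is a rational polytope. Since $Y_\bullet$ consists of $B$-invariant subvarieties, the valuation $\nu_D$ is $B$-invariant; in particular, each vertex of $\Delta_{Y_\bullet}(D)$ can be represented by a homogeneous section which is a weight vector for the torus $H \subseteq B$, and the projection $q: \Delta_{Y_\bullet}(D)\to \Pi_D$ onto the weight polytope is well defined.

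The next step is to verify that $q$ is the restriction of a $\Q$-linear map. In the framework of \cite{KK10}, $q$ is the map sending a lattice point $(\nu_D(s)/k,\,1)$ of the Okounkov semigroup to the normalized weight $\lambda/k$, whenever $s \in H^0(Z_w,\mathcal{O}_{Z_w}(kD))$ is an $H$-semi-invariant of weight $\lambda$; since $H$ acts by characters on each graded piece of the Cox ring and the semigroup generators supplied by Theorem \ref{T: indok} can be chosen to be $H$-weight vectors, this prescription is $\Q$-linear on a generating set and extends uniquely to a $\Q$-linear map on the affine span. Therefore, for a rational interior point $\mu \in \Pi_D$, the fiber $q^{-1}(\mu)$ is a rational affine subspace, and the slice $q^{-1}(\mu)\cap \Delta_{Y_\bullet}(D)$ is a rational polytope. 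Applying the asymptotic formula
\begin{align*}
\lim_{k\to\infty}\frac{\dim W_{k\mu}}{k^{d-r}} = \vol_{d-r}\bigl(q^{-1}(\mu)\cap \Delta_{Y_\bullet}(D)\bigr)
\end{align*}
recalled above then exhibits the left-hand side as the Lebesgue volume of a rational polytope, proving the first claim.

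For the second assertion I would use that the birational morphism $p_w: Z_w\to Y_{\overline{w}}$ is $B$-equivariant, and that by rational singularities of the Schubert variety the isomorphism \eqref{E: pbschubert} (in the versions for all powers $L^k$) is an isomorphism of $B$-modules. Consequently the $H$-weight-space decompositions of $H^0(Y_{\overline{w}},L^k)$ and $H^0(Z_w, p_w^*L^k)$ agree, so applying the first part to a divisor $D$ on $Z_w$ with $\mathcal{O}_{Z_w}(D)=p_w^*L$ yields the claim for $L$.

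The main technical point to justify is the $\Q$-linearity of $q$: one has to check that the assignment of an $H$-weight to a semigroup generator of the Okounkov body descends to a well-defined rational-linear map on the polytope. This is a standard feature of Newton--Okounkov bodies under a torus action, but in our setting where the flag is merely $B$-invariant (rather than cut out by $H$-invariant divisors in local coordinates) a brief verification using the equivariance property \eqref{E: equivprop} of sections, together with the $B$-invariance of $\nu_D$, is required to identify $q$ with Brion's moment map for the $H$-action on $\mbox{Proj}(R(D))$.
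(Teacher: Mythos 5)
Your proposal is correct and follows essentially the same route as the paper: the asymptotic formula recalled just above the corollary expresses the limit as $\mbox{vol}_{d-r}(q^{-1}(\mu)\cap\Delta_{Y_\bullet}(D))$, which is the volume of a rational polytope once one knows (from Theorem \ref{T: okbs}) that $\Delta_{Y_\bullet}(D)$ is a rational polytope and that $q$ is rational-linear; the Schubert-variety claim then reduces to the Bott--Samelson case via the $B$-equivariance of $p_w$ and the $H$-equivariance of the isomorphism \eqref{E: pbschubert}. The only stylistic difference is that the paper's proof explicitly treats only the second assertion (the first being regarded as already established by the preceding discussion and the references to Okounkov and Kaveh--Khovanskii for the linearity of $q$), whereas you spell out the $\Q$-linearity of $q$; your sketch of that verification is a bit loose---the cleanest justification is that $\nu_D$ has one-dimensional leaves, so each associated graded piece of the filtration is a one-dimensional $B$-module carrying a single character, and multiplicativity of $\nu_D$ makes the resulting weight assignment additive, rather than an argument about choosing semigroup generators to be weight vectors---but the conclusion is the same and the overall argument is sound.
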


\begin{proof}
We only need to prove the second claim about Schubert varieties. 
Here we notice that the projection morphism 
$p_w: Z_w \to Y_{\overline{w}}$ is $B$-equivariant, and so in 
particular $H$-invariant. Hence, the isomorphisms 
\eqref{E: pbschubert} for the powers $L^k$ are $H$-equivariant, 
so that the claim thus follows from the first part about Bott-Samelson 
varieties.
\end{proof}


\begin{thebibliography}{999999}


\bibitem[AS14]{AS}
Anderson, D., Stapledon, A., 
{\it Schubert varieties are log Fano over the integers},
Proc. Amer. Math. Soc. {\bf 142} (2014), 2, 409--411

\bibitem[B86]{Br86}
Brion, M.,
{\it Sur l'image de l'application moment},
{S\'eminaire d'alg\`ebre {P}aul {D}ubreil et {M}arie-{P}aule
              {M}alliavin ({P}aris, 1986)},
{Lecture Notes in Math.},
{ \bf 1296}, {177--192},
{Springer} {1987}

\bibitem[B]{B}
Brion, M., {\it Lectures on the geometry of flag varieties}, 
lecture notes

\bibitem[C12]{C}
Casagrande, C., {\it Mori dream spaces and Fano varieties}, lecture notes, 
2012







\bibitem[HK00]{hk}
		Hu, Y., Keel, S.,
		{\it Mori Dream Spaces and GIT},
		Michigan Math. J. {\bf 48} (2000), 331--348




\bibitem[KK09]{KK09}
{Kaveh, K., Khovanskii, A.G.},
{\it Newton convex bodies, semigroups of integral points, graded algebras 
and intersection theory}, 
Ann. of Math. (2) {\bf 176} (2012), 2, 925--978

\bibitem[KK10]{KK10}
{Kaveh, K., Khovanskii, A.G.},
{\it Convex bodies associated to actions of reductive groups},
Mosc. Math. J. {\bf 12} (2012), 2, 369--396, 461

\bibitem[K11]{K11}
{Kaveh, K.}, 
{\it Crystal bases and Newton-Okounkov bodies},
preprint, 2011, http://arxiv.org/abs/1101.1687



\bibitem[KLM12]{KLM12}
{K\"uronya, A., Lozovanu, V., Maclean, C.},
{\it Convex bodies appearing as Okounkov bodies of divisors}, 
Adv. Math. {\bf 229} (2012), 2622--2639





\bibitem[LT04]{LT04}
Lauritzen, N., Thomsen, J.F., 
{\it Line bundles on Bott-Samelson varieties}, 
J. Alg. Geom. {\bf 13} (2004), 461--473


\bibitem[LM09]{LM09}
{Lazarsfeld, R., Musta{\c{t}}{\u{a}}, M.},
{\it Convex bodies associated to linear series},
{Ann. Sci. \'Ec. Norm. Sup\'er.} {\bf 42}
(2009), 783--835

\bibitem[Li98]{Li98}
{Littelmann, P.},
{\it Cones, crystals, and patterns},
Transform. Groups {\bf 3} (1998), 2, 145--179

\bibitem[Oka11]{Oka11}
Okawa, S., 
{\it On images of Mori dream spaces}, 
preprint, 2011, arxiv.org/abs/1104.1326

\bibitem[Oko96]{Ok96}
{Okounkov, A.},
{\it Brunn-{M}inkowski inequality for multiplicities},
{Invent. Math.} {\bf 125} (1996), 405--411

\bibitem[SS14]{dh}
Schmitz, D., Sepp\"anen, H.,
{\it On the polyhedrality of global Okounkov bodies},
arxiv.org/abs/1403.4517, to appear in {\it Adv. Geom.}




\end{thebibliography}
\end{document}